\def\res{\text{\rm Res}}
\def\const{\text{\rm const}}
\def\ti{\tilde}
\def\dist{\text{\rm dist}}
\def\supp{\text{\rm supp}\,}
\def\to{\rightarrow}
\def\bs{\bigskip}
\def\ms{\medskip}
\def\no{\noindent}
\def\R{{\mathbb R}}
\def\Z{{\mathbb{Z}}}
\def\C{{\mathbb{C}}}
\def\N{{\mathbb{N}}}
\def\MM{{\mathcal M}}
\def\SS{{\mathcal S}}
\def\HH{{\mathcal H}}
\def\EE{{\mathcal E}}
\def\e{\varepsilon}
\def\L{\Lambda}
\def\l{\lambda}
\def\lan{\lambda_n}
\def\gan{\gamma_n}
\def\Sch{Schr\"odinger }
\theoremstyle{plain}
\newtheorem{lemma}{Lemma}
\newtheorem{theorem}{Theorem}
\newtheorem{corollary}{Corollary}
\newtheorem{proposition}{Proposition}
\newtheorem{remark}{Remark}
\newtheorem{example}{Example}
\numberwithin{equation}{section}
\author{N.~Makarov}
\thanks{The first author is supported by
N.S.F. Grant DMS-1500821}
\address{California Institute of Technology\\
Department of Mathematics\\
Pasadena, CA 91125, USA}
\email{makarov@its.caltech.edu}
\author{A.~Poltoratski}
\address{Texas A\&M University
\\ Department of Mathematics\\
College Station, TX 77843, USA }
\email{alexeip@math.tamu.edu}
\thanks{The second author is supported by
N.S.F. Grant DMS-1665264}
\title{Two-spectra theorem with uncertainty}
\begin{document}

\begin{abstract} The goal of this paper is to combine ideas from the theory of mixed spectral
problems for differential operators with new results in the area of the Uncertainty Principle in Harmonic Analysis (UP).
Using  recent solutions of Gap and Type Problems of UP we prove a version of Borg's two-spectra theorem for \Sch operators, allowing uncertainty in the placement
of the eigenvalues. We give a formula for the exact 'size of uncertainty', calculated from the lengths of the intervals where the eigenvalues may occur. Among other applications,
we  describe pairs of indeterminate operators in the three-interval case of the mixed spectral problem.
At the end of the paper we discuss further questions and open problems.

\end{abstract}

\maketitle

\section{Introduction}

\ms\no This paper studies connections between classical Harmonic Analysis and spectral problems for differential operators.
A newly developed approach in the area of the Uncertainty Principle in Harmonic Analysis (UP), based on the use
of Toeplitz operators, has brought new progress to several long-standing problems. The Toeplitz approach, pioneered
 by Hruschev, Nikolski and Pavlov in \cite{Pa, HNP}, was further developed and applied to a broader class of problems
in \cite{MIF1, MIF2}. Recent applications of the Toeplitz approach in \cite{Polya, Gap, Type, Poly, Det, CBMS} involved a variety of
problems of UP and adjacent fields. Two of such problems, the so-called Gap and Type Problems, will be revisited in this note.

\ms\no While most of the remaining open problems of UP are extremely hard,  progress in any of them usually invites numerous applications in
related fields. One of such related fields is spectral theory for differential equations and Krein's canonical systems.
The so-called Weyl-Titchmarsh transform, which can be viewed as a generalization of the Fourier transform for a broad class of
linear differential operators, provides clear analogies between spectral theory and Fourier analysis. However, despite
being clearly visible at the intuitive level, such analogies
can take time and significant effort to formulate in rigorous mathematical language. One of the recent examples of such relations is
the result on equivalence between the so-called Beurling-Malliavin problem on completeness of complex exponentials, solved in 1960s,
and mixed spectral problems for \Sch operators, found by M. Horvath only in 2004 (\cite{Horvath}; see also \cite{MIF1}).


\ms\no As was mentioned above, the goal of this paper is to discuss the connections between  the Gap  and Type Problems and the so-called mixed spectral problems for second order differential operators.
Surprisingly, such connections do not seem to be  reflected in the literature. While most of our results on spectral problems could be formulated
in more general classes of differential operators, such as Krein's canonical systems, we choose to test our methods on one the most studied models,
the \Sch (Sturm-Liouville) operator on an interval.

\ms\no We consider the equation
\begin{equation}-u''+qu=z^2u \end{equation}
on the interval $[0,\pi]$. To simplify the statements and formulas, we will always assume that the potential
function $q$ belongs to $L^2([0,\pi])$, although most of our results admit $L^p$-analogs with $p\geq 1$.

\ms\no Classical
spectral problems are divided into two main groups: direct and inverse problems. In direct problems one needs to find the spectra or the spectral measure
of the operator from its potential $q$, while in inverse problems one aims to recover $q$ from the spectral information.
A famous result by Marchenko \cite{M1, M2} says that $q$ can be uniquely recovered from the spectral measure. Another classical result is Borg's two-spectra theorem \cite{Borg},
which says that $q$ can be recovered from two spectra, corresponding to different pairs of boundary conditions, see Section \ref{functions} for further discussion.

\ms\no A relatively new type of spectral problems, the so-called mixed spectral problem, asks to recover the operator from partial information on the potential and the spectrum.
The well-known theorem by Hochstadt and Lieberman from 1978 \cite{HL} says that knowing the potential on one-half of the interval $(0,\pi/2)$ and knowing
one of the spectra allows one to recover the operator uniquely. The result is precise in the sense that the knowledge of the spectrum minus one point, or of the potential on $(0,\pi/2-\e)$, is insufficient.

\ms\no Further results in the same direction obtained by   del Rio, Gesztesy, Horvath, Simon and others \cite{S, SG1, DGS, Horvath, MIF1} allow one to combine various parts of the spectral and direct information to recover the operator. Together these results bring about the following
quantitative interpretation of the spectral data.

\ms\no Suppose one is given a certain amount of spectral information on an operator. In general, such information can be
very diverse, from knowing a part of the spectrum, to knowing parts of several spectra, e. g. \cite{DGS, MIF1},
or some of the pointmasses of one or several of the spectral measures. The general question one may ask in such a situation is: What part of the full spectral information is missing?
In some  cases the answer is ready: for instance, as follows from Borg's theorem, one spectrum gives exactly one
half of the full information, since to recover the operator one needs two spectra. Results on the  mixed spectral problems suggest that knowing a subsequence of the spectra of relative density, say, one-half gives one-forth
of the full information, since it needs to be complemented with the knowledge of the potential on three-fourths of the interval. Knowing
the spectra of two restrictions of the original operator on $(0,a)$ and $(a,\pi)$ gives one half of the information if the spectra are disjoint, because to recover the operator one needs to complement it with one spectrum of the whole operator \cite{SG2},  etc.

\ms\no The ideas of mixed spectral problems allow us to formulate this rather vague question in precise mathematical terms.
We may say that the given spectral information lacks  a part of size $a,\ 0\leq a\leq 1,$ if together with the
potential on any $(0,a\pi+\e),\ \e>0$ this information allows one to determine the operator uniquely, while knowing the potential on $(0,a\pi-\e)$ is insufficient. It becomes an interesting problem to determine $a$ for various sets of (incomplete) spectral information.

\ms\no In the present paper we answer this question for the 'uncertainty' version of Borg's problem. Suppose that instead
of knowing two spectra of a \Sch operator we are given a sequence of intervals on the real line where the eigenvalues from
the two spectra must occur. Clearly, this spectral information is incomplete, but what part of the full information does it constitute?
This question can be considered within the context of Uncertainty Quantification, which is an active research area in natural sciences, engineering and numerical analysis.

\ms\no
In Section \ref{2int} we give a formula to find $a$ in this problem, which of course depends on the lengths of the intervals.
As we will see, this question turns out to be closely related to the Gap and Type  Problems of UP. Our formula for the size of uncertainty in the
 two-spectra problem
is equivalent to a case of the Gap Problem which can only be solved with  recent results of \cite{Polya, Gap, Type, CBMS}.

\ms\no Let $\mu$ be a non-zero finite complex measure on the real line.
By $\hat \mu$ we denote its Fourier transform
$$\hat\mu(z)=\int\exp(-izt)d\mu(t).$$
In many applications a special role is played by measures with a spectral gap, i.e., those non-zero measures for which
$\hat\mu=0$ on $(-a,a)$ for some $a>0$. Such measures are associated, for instance, with the exponential version of the moment problem,
determinacy properties in the theory of random processes and the study of high-pass signals in electrical engineering.
In its most general formulation, the Gap Problem asks for necessary and sufficient conditions for a measure to have a spectral gap of a
given size.

\ms\no The original statement of UP, as formulated by Norbert Wiener, says that a measure (function, distribution) and its Fourier transform cannot be simultaneously small.
The Gap Problem traditionally belongs to the area of UP.
 Here the smallness of $\hat\mu$ is understood in the sense of the size of the gap. The statement that one hopes to obtain is that if the support of $\hat\mu$ has a large gap, then
the support of $\mu$ cannot be too "rare", its weight cannot decay too fast, etc.
For more on the history of the Gap Problem see
\cite{  Beurling1,  Koosis, Levinson,    CBMS}.


\ms\no The area of mixed spectral problems has direct connections to the Gap Problem. The first evidence of such a connection can be seen from
one of the main results of \cite{S}, which says that the difference of the Weyl functions of two operators whose potentials coincide on $(0,c)$
must decay fast at infinity, see Section \ref{FSGap}. Experts in the Gap Problem could recognize such a decay condition
as an estimate for the Cauchy transform of a measure with a spectral gap, after a square root substitution. In Section \ref{FSGap} we will see
that the potentials of two operators coincide on $(0,a\pi)$ if and only if the difference of the two spectral measures has the  gap
$(-2a,2a)$ in its Fourier spectrum.

\ms\no Another result of this paper is a 'parametrization' of counterexamples (indeterminate operators) in the so-called three-interval case of the mixed spectral problem.
After the theorem by Hochstadt and Lieberman mentioned above and further results by del Rio, Gesztesy, Simon and Horvath the case of
two intervals, i.e., the case when the potential is known on $(0,a),\ 0<a<\pi$ and unknown on $(a,\pi)$, is relatively well-studied. The logical problem is to try to replace
$(0,a)$ with other subsets of $(0,\pi)$, starting with a natural next step of two intervals $(0,a)\cup (b,\pi),\ 0<a<b<\pi$. However, such attempts immediately
meet the following elegant counterexample contained in \cite{SG2}. Let the potential $q$ of a \Sch operator $L$ satisfy $q(x)=q(\pi-x)$ for all $x\in(0,\frac \pi 2-\e)$. Let $\ti L$ be the operator with the reflected potential $\ti q(x)=q(\pi-x)$ for all $x$. It is well known that
then the spectra of the two operators, with any pair of symmetric boundary conditions at the endpoints, will coincide. Nonetheless, the operators are not identical, unless $q(x)=q(\pi-x)$ for $x\in(\frac \pi 2-\e, \frac \pi 2)$.
Thus, having almost complete direct information (knowing the potential on the two intervals of total length $\pi-2\e$) and a spectrum is insufficient to recover the operator.

\ms\no This counterexample shows that the two-interval problem is substantially different from the three-interval case. In view of Horvath's result, see Sections \ref{sHorvath}, \ref{P3}, it seems that these differences are a reflection of the differences between problems of completeness of exponential
functions in $L^2$ over one interval (the classical case) and two or more intervals (the case of band spectrum). Even though the classical case was
solved via the famous Beurling-Malliavin theorem in 1960s, the latter case is still widely open, see for instance \cite{Olevsky}. We discuss
these connections and formulate further questions in Sections \ref{BM} and \ref{P3}.

\ms\no Until now the counterexample for the three-interval problem mentioned above was the only one existing in the literature, prompting some of
the experts to ask if it was, in some sense, the only counterexample of that kind. In Section \ref{s3int} we formulate a result describing all possible counterexamples
for the three-interval problem in terms of the spectral measures of the operators. It follows from our 'parametrization' that the pairs of operators with
'almost symmetric' potentials provide only a  submanifold of all pairs of indeterminate operators.
All other such pairs, however, have to be close to symmetric in terms of the asymptotics of the spectral measure, see Section \ref{s3int}.
In the opposite direction, we show that
there exists a large class of operators which are uniquely determined by their potentials on two intervals and a proper part of the spectrum.
This raises the natural question of describing the operators with such uniqueness properties discussed in Section \ref{P2}.

\ms\no The paper is organized as follows.

\begin{itemize}

\item In Sections \ref{spectra} and \ref{functions} we define the spectra, spectral measures and Weyl functions
for \Sch operators.

\item In Section \ref{even} we discuss special properties of an operator with even potential and point out
a connection between its spectral measure and the classical problem on completeness of polynomials.

\item In Section \ref{HB} we give definitions and discuss basic properties of Hermite-Biehler functions and de Branges spaces.
In Section \ref{HBS} we discuss those spaces in relation to \Sch operators.

\item In Section \ref{HBSthm} we give a description of Hermite-Biehler functions corresponding to \Sch operators with square-summable potentials.

\item In Section \ref{Kshift} we define the Krein spectral shift function for a \Sch equation and prove some of its properties to be used in the main proofs.

\item In Section \ref{BM} we give definitions of Beurling-Malliavin (BM) density and formulate the famous BM Theorem, which solves the
classical problem on completeness of families of exponential functions

\item In Section \ref{Gap} we give an overview of some of the recent results on the Gap and Type Problems to be used in Section \ref{2int}.

\item In Section \ref{sHorvath} we formulate and give a short proof to a theorem by Horvath on the connection between mixed spectral problems for \Sch operators and the BM problem on completeness of exponential systems.

\item In Section \ref{FSGap} we establish the connection between the Gap and Type Problems and mixed spectral problems for \Sch operators.

\item In Section \ref{2int} we formulate and prove a version of the two-spectra theorem with uncertainty mentioned in the introduction.

\item In Section \ref{s3int} we parametrize all examples of non-uniqueness in the three-interval version of the mixed spectral problem. In Section
\ref{3intU} we show the existence of operators uniquely determined by the 3-interval mixed data.

\item In Sections \ref{P1}-\ref{P3} we discuss further examples and open problems

\end{itemize}


\newpage

\section{Holomorphic functions in spectral problems}

\bs\subsection{The \Sch operator and its spectra}\label{spectra}

\ms\no As was mentioned in the introduction, we consider the \Sch equation
\begin{equation}Lu=-u''+qu=z^2u \label{e00}\end{equation}
on the interval $[0,\pi]$. To avoid unnecessary technicalities we will make the following assumptions.

\ms\no Even though some of our results can be extended to the case $q\in L^p$, we mostly restrict ourselves to square summable potentials,  $q\in L^2$,
which makes some of the statements considerably shorter.
 For the same reason
we will consider only Dirichlet ($u=0$) or Neumann ($u'=0$) boundary conditions at the endpoints.

\ms\no The operator
$$L:u\mapsto -u''+qu$$
is a self-adjoint operator in $L^2([0,\pi])$ whose domain is the set of functions $u$ with absolutely continuous derivatives which satisfy the
boundary conditions and the condition $-u''+qu\in L^2([0,\pi])$. In the case $q\in L^2([0,\pi])$ the domain consists of functions from the Sobolev space $W^{2,2}$ satisfying the boundary conditions.
The spectrum of the operator
with $q\in L^1$ and any self-adjoint boundary conditions at the endpoints is bounded from below.
We will  assume the spectrum $\Sigma_{ND}$ of $L$ with Dirichlet-Neumann boundary conditions to be positive. This assumption is not overly restrictive as any operator can be made positive by adding a positive constant to $q$: the spectrum then shifts to the right by the same constant.

\ms\no We will denote the set of  operators satisfying the above conditions by $\SS^2$. Occasionally we will use the notation $\SS^1$ for the operators with summable potentials.

\ms\no As it is often done in this area, to facilitate the application of standard tools of Fourier analysis we will apply the square root transform
to the spectra and analytic functions associated with our operators. In particular,
we will denote by $\sigma_{DD}$ and $\sigma_{ND}$ the spectra of the operator $L$ after the square root transform:
$$\sigma_{DD}=\{\lan|\lan^2\in\Sigma_{DD}\}\cup\{0\}, \ \ \ \sigma_{ND}=\{\lan|\lan^2\in\Sigma_{ND}\}.$$
Note that under our positivity assumptions  both spectra are real.

\ms\no We say that a sequence of complex numbers is discrete if it does not have finite accumulation points. Both spectra $\sigma_{DD}$ and $\sigma_{ND}$
are real discrete sequences.  Moreover, it is well known that
 \begin{equation}\sigma_{DD}=\{\lan\}_{n\in\Z},\ \l_0=0,\ \l_{\pm n}=\pm\pi\left(n+\frac Cn+\frac {a_{n}}n\right),\ n\in\N,\label{e000a}\end{equation}
  and
  \begin{equation}\sigma_{ND}=\{\eta_n\}_{n\in\pm\N},\ \eta_{\pm n}=\pm\pi \left(n -\frac 12+\frac Cn+\frac {b_n}n\right),\ n\in\N ,\label{e000}\end{equation}
 where $C$ is a real constant and $\{a_n\},\{b_n\}\in l^2$. Conversely, two interlacing sequences are equal to the spectra $\sigma_{DD},\ \sigma_{ND}$
 of a \Sch operator on $[0,\pi]$ with an $L^2$-potential if and only if the sequences satisfy the above asymptotics with some $a_n,b_n\in l^2$.
  These asymptotics follow from more general formulas by Marchenko \cite{M2}, see also \cite{Tru} or \cite{BBP}.


 \bs\bs\subsection{ Analytic integrals of spectral measures}\label{functions}

 \ms\no We denote by $\Pi$ the Poisson measure on the real line, $d\Pi(x)=\frac{dx}{1+x^2}$, and
by $ L^1_\Pi$ the space of Poisson-summable functions on $\R$.
If $u\in L^1_\Pi$  we define its Herglotz integral as
$$H u(z)=\frac1{\pi } \int\left[\frac1{t-z}-\frac t{1+t^2}\right]u(t)dt.$$
If $\mu$ is a Poisson finite measure on $\R$,
i.e., $$\int\frac{d|\mu|(x)}{1+x^2}<\infty,$$
 then
$$H \mu(z)=\frac1{\pi } \int\left[\frac1{t-z}-\frac t{1+t^2}\right]d\mu(t).$$

 \ms\no Denote by $u_{z}(t)$ the solution of \eqref{e00} with boundary conditions $u(0)=0,\ u'(0)=1$. The Weyl function $m_+$ is defined as
 $$m_+=-\frac{u_z'(\pi)}{zu_z(\pi)}.$$
 It is well-known that $m_+$ is a Herglotz integral of a positive measure supported on $\sigma_{DD}=\{\lan\}$:

 $$ m_+(z)=H\mu_+(z),$$
 $$\mu_+=\alpha_0\delta_0+\sum_{n\in\N} \alpha_n(\delta_{\lambda_{ n}}+\delta_{\l_{-n}}),$$
 \begin{equation}\alpha_{ n}=1+\frac {c_n}{n+1},\ c_n\in l^2.\label{asymp1}\end{equation}

\ms\no  Similarly, if $v_{z}(t)$ is the solution of \eqref{e00} with boundary conditions $v(\pi)=0,\ v'(\pi)=1$, one defines
 $$m_-=\frac{v_z'(0)}{zv_z(0)}.$$ Then $\mu_-$ is defined via
 $$m_-(z)=H\mu_-(z)$$
 $$\mu_-=\beta_0\delta_0+\sum_{n\in\N} \beta_n(\delta_{\lambda_{ n}}+\delta_{\l_{-n}}),$$
 \begin{equation}\beta_{ n}=1+\frac {d_n}{n+1},\ d_n\in l^2.\label{asymp2}\end{equation}

 \ms\no The asymptotics of the pointmasses $\alpha_n, \beta_n$ can be deduced from the spectral asymptotics of $\sigma_{DD}$
 and $\sigma_{ND}$ discussed in the last section. Moreover, together the asymptotics for $\lan\in \sigma_{DD}$ ($\eta_n\in \sigma_{ND}$)
 and for $\alpha_n$ ($\beta_n$) give an if and only if condition for a positive measure to be a spectral measure $\mu_+$ ($\mu_-$)
 of a \Sch operator from $\SS^2$.
The famous  theorem by Marchenko says that a regular ($q\in L^1$) \Sch operator can be uniquely recovered from its spectral
 measure $\mu_+$ (or $\mu_-$). Another classical result is the following two-spectra theorem by Borg. Throughout the
 paper, we will say that a \Sch operator is uniquely determined by its (mixed) spectral data, if any other
 operator from the same class with the same data must have the same potential $q$ a. e. on $[0,\pi]$.

 \begin{theorem}[Borg, \cite{Borg}]\label{Borg} A \Sch operator $L\in \SS^1$ is uniquely determined by its spectra $\sigma_{DD}$ and $\sigma_{ND}$.
 No proper subset of $\sigma_{DD}\cup\sigma_{ND}$ has the same property.
 \end{theorem}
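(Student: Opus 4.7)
The plan is to reduce uniqueness from the two spectra to uniqueness from the Weyl $m$-function and then invoke Marchenko's theorem. From the definition $m_+(z) = u_z'(\pi)/(z u_z(\pi))$, the poles of $m_+$ are exactly the points of $\sigma_{DD} = \{\lambda_n\}$, while the zeros (including the one at $z=0$ produced by the factor $1/z$ together with the normalization of $u$) encode $\sigma_{ND} = \{\eta_n\}$. By the asymptotics \eqref{e000a} and \eqref{e000}, both sequences are simple, real, interlace, and have the prescribed second-order asymptotics. Using these asymptotics, I would write $m_+$ as a Hadamard canonical product
$$m_+(z) = C \cdot \frac{1}{z}\prod_{n\neq 0}\left(1-\frac{z}{\eta_n}\right)e^{z/\eta_n}\Big/\prod_{n\neq 0}\left(1-\frac{z}{\lambda_n}\right)e^{z/\lambda_n},$$
which converges because the $l^2$-corrections in \eqref{e000a}, \eqref{e000} are summable after symmetrization. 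The constant $C$ is pinned down by the large-$|y|$ behavior $m_+(iy) = -1/|y| + o(1/|y|)$ that follows from standard Weyl theory. Thus the two spectra determine $m_+$ completely.

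Once $m_+$ is recovered, Stieltjes inversion recovers the spectral measure $\mu_+$ (its pointmasses $\alpha_n$ being the residues of $m_+$), and Marchenko's theorem cited in the excerpt gives a unique $q \in L^1(0,\pi)$ with the prescribed $\mu_+$. This proves the uniqueness part of the theorem.

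For the sharpness statement, I would argue by constructing a competitor when a single eigenvalue is removed. Suppose we delete one point, say $\lambda_{n_0}$, from $\sigma_{DD} \cup \sigma_{ND}$; I would show that $\lambda_{n_0}$ is a free parameter subject only to the interlacing with $\sigma_{ND}$ and the asymptotics \eqref{e000a}. Varying $\lambda_{n_0}$ by a small amount gives a new pair of sequences still satisfying the Marchenko--Trubowitz characterization, hence realised by some $\tilde q \neq q$ in $\SS^1$, whose spectra agree with those of $L$ everywhere except at the one removed point. For an omitted DN-eigenvalue the argument is symmetric via $m_-$. An alternative, more elegant realization of the same idea uses the Darboux/commutation method, which explicitly builds a new potential differing from $q$ by shifting a single eigenvalue while leaving all others, and the other spectrum, invariant.

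The main obstacle is verifying that the modified $m$-function obtained after moving one eigenvalue really corresponds to an operator in the class $\SS^1$ (not merely to a formal Hermite--Biehler function): one must check that the resulting pointmasses still satisfy \eqref{asymp1} and that the interlacing/asymptotic conditions of Marchenko's inverse theorem are preserved. Both checks are routine once the perturbation is kept within an $l^2$-neighborhood, but they are the only place where the class $\SS^1$ actually enters the second half of the proof.
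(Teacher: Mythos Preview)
Your argument is correct and follows the same overall scheme the paper sketches (attributing it to Donoghue): from the two spectra recover $m_+$, hence $\mu_+$, and then invoke Marchenko. The only difference is in how $m_+$ is rebuilt. You write $m_+$ directly as a ratio of Hadamard products over its zeros and poles and fix the multiplicative constant by the known asymptotics $m_+(iy)\sim -1/|y|$. The paper instead observes that the two spectra determine the Krein spectral shift $k$ (equal to $\pi$ on the intervals between consecutive $\lambda_n$ and $\eta_n$, and $0$ elsewhere), and then uses the identity $m_+ = H\mu_+ = \exp(Hk + c)$, with $c=0$ forced by the same asymptotics \eqref{asymp1}. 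These are the same computation in different clothing: since $H\chi_{(a,b)}(z)=\frac{1}{2\pi}\log\frac{b-z}{a-z}$, exponentiating $Hk$ reproduces exactly your product of linear factors. The Krein-shift packaging has the advantage that it generalizes cleanly to the perturbation-theoretic framework used later in the paper, while your Hadamard formulation is more self-contained.

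Two small remarks. First, because the sequences are even, the convergence factors $e^{z/\lambda_n}$, $e^{z/\eta_n}$ in your product cancel in pairs and are unnecessary; writing the product symmetrically avoids them. Second, the paper does not actually prove the sharpness clause; your sketch (perturb a single eigenvalue within the interlacing window and appeal to the inverse characterization) is the standard way to do it and is fine, with the caveat you already note that in $\SS^1$ the asymptotics are weaker than \eqref{e000a}--\eqref{e000} and one must use the $L^1$-version of the Marchenko characterization rather than the $l^2$-remainder form stated in the paper.
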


\ms\no As was mentioned in the introduction, we will present an 'uncerainty' version of this theorem in Section \ref{2int}.




 \ms\no For a subinterval $(a,b)$ of $(0,\pi)$ we denote by $M=M_{(a,b)}$ the transfer matrix, the  matrix valued entire function defined as
 $$M(z)=\left(\begin{array}{cc}
   u_z(b) & v_z(b) \\
   u'_z(b) & v'_z(b)
   \end{array} \right)$$
   where $u_z$ and $v_z$ are solutions for the restriction of the equation \eqref{e00} on the interval
   $(a,b)$ with Dirichlet and Neumann   initial conditions at $a$ respectively. Note that by the Wronskian
   identity, $det M\equiv 1$ and $M(w_z(a),w'_z(a))^T=(w_z(b),w'_z(b))^T$ for any solution $w_z$ of \eqref{e00}.

 \bs\bs\subsection{The even operator}\label{even}

\ms\no We will call an operator $L$ even (with respect to the middle of the interval), if its potential satisfies $q(x)=q(\pi-x)$ for all $x\in(0,\pi/2)$. Such an operator is uniquely
defined by one of its spectra, say $\sigma_{DD}$, see for instance \cite{Tru}. If $\L=\{\lan\}$ is a sequence satisfying \eqref{e000a}, we denote by $\Gamma(\L)=\{\gamma_n\}$ the sequence of pointmasses
of the spectral measure $\mu_+=\sum \gamma_n\delta_{\lan}$ corresponding to the unique even operator with  $\sigma_{DD}=\{\lan\}$. Notice that in the even case $\mu_+=\mu_-$.

\begin{lemma}\label{leven} For any \Sch operator $L\in \SS^1$ the pointmasses of $\mu_\pm$ satisfy
$$\alpha_n\beta_n=\gamma_n^2.$$

\end{lemma}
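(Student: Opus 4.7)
The plan is to compute $\alpha_n\beta_n$ explicitly as residues of the Weyl functions at the common pole $\lambda_n$, to observe that the resulting expression is an invariant of the Dirichlet--Dirichlet spectrum $\sigma_{DD}$, and then to read off the lemma by specializing to the unique even operator with that spectrum, where by symmetry $\alpha_n=\beta_n=\gamma_n$ and the identity becomes a tautology.

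For the first step I would expand $u_z(\pi) = \dot u_{\lambda_n}(\pi)\,(z-\lambda_n) + O((z-\lambda_n)^2)$ near $\lambda_n$ (the dot denotes $\partial_z$) and match the residue of $m_+(z) = u'_z(\pi)/(zu_z(\pi))$ against the Herglotz representation $m_+=H\mu_+$ to obtain
$$\alpha_n \;=\; -\,\frac{2\pi\, u'_{\lambda_n}(\pi)}{\lambda_n\,\dot u_{\lambda_n}(\pi)}.$$
For $\beta_n$ I need two classical auxiliary facts. First, constancy in $x$ of the Wronskian $W(u_z,v_z)$, evaluated at $x=0$ and $x=\pi$, yields the global identity $u_z(\pi) = -v_z(0)$ between entire functions of $z$, so $\dot v_{\lambda_n}(0) = -\dot u_{\lambda_n}(\pi)$. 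Second, $u_{\lambda_n}$ and $v_{\lambda_n}$ span the one-dimensional Dirichlet eigenspace at $\lambda_n^{2}$, and comparing initial data at $\pi$ gives $v'_{\lambda_n}(0) = 1/u'_{\lambda_n}(\pi)$. Substituting these into the analogous formula for $\beta_n$ and multiplying, the factor $u'_{\lambda_n}(\pi)$ cancels and I get
$$\alpha_n\beta_n \;=\; \frac{4\pi^{2}}{\lambda_n^{2}\,\dot u_{\lambda_n}(\pi)^{2}}.$$

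The crux of the argument is that the right-hand side is intrinsic to $\sigma_{DD}$. Indeed $z\mapsto u_z(\pi)$ is entire, even in $z$, of exponential type $\pi$, its zero set is precisely $\sigma_{DD}$, and its $\sin(\pi z)/z$-type asymptotics on the imaginary axis come from the standard Marchenko transmutation representation of $u_z$; by Hadamard factorization $u_z(\pi)$ is therefore determined by $\sigma_{DD}$ up to a single constant that is pinned down by those asymptotics, so $\dot u_{\lambda_n}(\pi)$ itself depends only on $\sigma_{DD}$. Applying the formula to the unique even operator with that spectrum---where the reflection $v_z(x) = -u_z(\pi-x)$ solves \eqref{e00} with the correct endpoint data, forcing $\mu_+ = \mu_-$ and hence $\alpha_n = \beta_n = \gamma_n$---identifies the right-hand side with $\gamma_n^{2}$, and the spectral invariance just established then yields $\alpha_n\beta_n = \gamma_n^{2}$ for every $L\in\SS^{1}$ with that spectrum. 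The only delicate bookkeeping is at the anchor index $n=0$: there $u_z(\pi)$ either does not vanish (in which case $\alpha_0 = \beta_0 = \gamma_0 = 0$ and the claim is trivial) or has a double zero by evenness, in which case the residue calculation for $m_\pm$ must absorb the extra factor $z$ in the denominator; neither possibility disturbs the main argument, so I expect the spectral-invariance step to be the place where the proof really does work rather than a routine verification.
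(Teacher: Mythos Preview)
Your proposal is correct and follows essentially the same route as the paper: compute $\alpha_n\beta_n$ as a product of residues, reduce it to $c/\bigl(\lambda_n\,\dot u_{\lambda_n}(\pi)\bigr)^2$, observe that the entire function $z\mapsto u_z(\pi)$ is determined by $\sigma_{DD}$ alone (via zeros plus asymptotics, i.e.\ Hadamard), and then specialize to the even operator where $\alpha_n=\beta_n=\gamma_n$. The paper carries out exactly these steps; the only cosmetic difference is that it looks at the product $m_+m_-$ directly rather than at the two residues separately, and it identifies $u_z(\pi)$ with $v_z(0)$ (up to sign) via the common zero set and the $\sin z/z$ asymptotics rather than via your Wronskian argument---your Wronskian shortcut is in fact a bit cleaner here.
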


\begin{proof} Let $u_z,v_z$ be solutions defined as above for $L$. Notice that for the even operator $L^*$ such solutions coincide up to a constant multiple and denote
by $w_z$ one of these solutions. The functions $zu_z(\pi), zv_z(0)$ and $zw_z(\pi)$ are entire functions of $z$ of exponential type $\pi$  which are real on the real line and have zeros at $\lan$. Since such an entire function is unique up to a real constant multiple, $zu_z(\pi), zv_z(0)$ and $zw_z(\pi)$ are equal up to
a constant multiple. Since all three functions must be equivalent to $\sin z/z$ as $z=iy, y\to\infty$ (see for instance \cite{Tru}, page 13, Theorem 3), they must be the same. Denote this entire function by $F(z)$.

\ms\no Let $\lambda\in\sigma_{DD}$. Let $u_\lambda'(\pi)=C$ and $v_\lambda'(0)=D$. Notice that uniqueness of Dirichlet-Dirichlet solution implies $C=1/D$.

\ms\no Since
$$m_+(z)m_-(z)=\frac{u_\lambda'(\pi)v_\lambda'(0)}{[F'(\lambda)]^2}(z-\lambda)^2+O(1),\ \textrm{as }z\to\lambda,$$
we conclude that
$$\alpha_n\beta_n=\frac 1{[F'(\lambda_n)]^2}.$$
It is left to notice that since the right-hand side does not depend on $L$, for the even operator we must have
$$\alpha^*_n\beta^*_n=\gamma^2_n=\frac 1{[F'(\lambda_n)]^2}.$$

\end{proof}

\begin{remark}\normalfont The constants $\gamma_n$, the pointmasses of the spectral measure of the even operator, appear under different names in other problems
of analysis. As an example, let us point out the following connection with the classical problem of completeness of polynomials in weighted spaces.

\ms\no Throughout the paper, we assume that all discrete real sequences are enumerated in natural increasing order.

\ms\no   We  say that a sequence $\L=\{\lan\}$ has (two-sided) upper density $d$ if
$$\limsup_{A\to\infty}\frac{\#[\L\cap (-A,A)]}{2A}=d.$$
If $d=0$ we say that the sequence has zero density.
  A discrete sequence $\L=\{\lan\}$ is called \textit{balanced} if the limit
\begin{equation}
\lim_{N\to\infty}\sum_{|n|<N}\frac {\l_n}{1+\lan^2}
\label{balance}
\end{equation}
exists.

\ms\no  Let $\L=\{\lan\}$ be a balanced sequence of finite upper density. For each $n, \ \lan\in\L,$ put
$$p_n= \frac 12\left[\log(1+\l_n^2)+\sum_{n\neq k, \ \l_k\in\L}\log\frac{1+\l_k^2}{(\l_k-\lan)^2}\right],$$
where the sum is understood in the sense of principle value, i.e. as
$$\lim_{N\to\infty}\sum_{0<|n-k|<N}\log\frac{1+\l_k^2}{(\l_k-\lan)^2}.$$
We will call the sequence of such numbers $P(\L)=\{p_n\}$ the \textit{characteristic sequence} of $\L$.

\ms\no Here is a sample of a statement on completeness of polynomials in terms of characteristic sequences.

\begin{theorem}\cite{Poly, CBMS}
Let $\mu$ be a finite  positive discrete measure supported on   $\R$ such that $L^1(\mu)$ contains polynomials.

\ms\no Polynomials are not dense in $L^1(\mu)$  if and only if there exists a balanced zero density subsequence $\L=\{\lan\}\subset \supp \mu$ such that its characteristic sequence $P(\L)=\{p_n\}$ satisfies
$$\exp{p_n}=O(\mu(\{\lan\}))$$
as $|n|\to \infty$.

\end{theorem}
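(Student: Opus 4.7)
I would prove both directions through one analytic object: a canonical product with zero set $\L$. First, by Hahn-Banach, polynomials fail to be dense in $L^1(\mu)$ if and only if there is a non-zero $\varphi\in L^\infty(\mu)$ with $\int x^k\varphi\,d\mu=0$ for every $k\geq 0$. Writing $\nu=\varphi\mu$, these moment conditions are equivalent to the Cauchy transform $K_\nu(z)=\int(x-z)^{-1}d\nu(x)$ vanishing at $\infty$ to infinite order, while $\varphi\in L^\infty(\mu)$ is equivalent to the residues $c_n$ of $K_\nu$ satisfying $|c_n|\leq C\mu(\{\lan\})$ on the sequence $\L=\{\lan:\varphi(\lan)\neq 0\}\subset\supp\mu$.

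For the forward direction, I would first thin $\L$ to a balanced zero-density subsequence still supporting a non-trivial such $\nu$. Zero density is forced by a Beurling-Malliavin type density argument applied to $K_\nu$, using the Type results recalled in Section \ref{Gap}: a non-constant meromorphic function with real simple poles that vanishes at $\infty$ to infinite order cannot have poles of positive upper density. The balancing condition \eqref{balance} can then be arranged by an additional symmetric thinning without destroying non-triviality. Associate to $\L$ the canonical product
$$F(z)=\prod_{\lan\in\L}\Bigl(1-\frac{z}{\lan}\Bigr),$$
interpreted as the principal value in the sense of \eqref{balance}; zero density combined with balancedness guarantees convergence to an entire function of minimal exponential type. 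A direct computation from the definition of $p_n$ yields $\exp p_n\asymp 1/|F'(\lan)|$, the discrepancy being the absolutely convergent product of factors $(1+\l_k^2)^{1/2}/|\l_k|$, which is bounded for zero-density $\L$ since these differ from $1$ by $O(1/\l_k^2)$. Since $K_\nu F$ is entire of minimal type and vanishes at $\infty$, a Phragm\'en-Lindel\"of argument shows that its interpolation values on $\L$ form a bounded sequence $(b_n)$; hence $c_n=b_n/F'(\lan)$, and the boundedness of $\varphi$ translates exactly to $\exp p_n=O(\mu(\{\lan\}))$.

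For the converse, given balanced zero-density $\L\subset\supp\mu$ with $\exp p_n=O(\mu(\{\lan\}))$, build $F$ as above and set $c_n=1/F'(\lan)$. Mittag-Leffler gives $1/F(z)=\sum_n c_n/(\lan-z)=K_\nu(z)$ for the discrete measure $\nu=\sum c_n\delta_{\lan}$, and $1/F$ vanishing to infinite order at $\infty$ means every polynomial moment of $\nu$ vanishes. The size condition implies $\varphi:=d\nu/d\mu\in L^\infty(\mu)$, $\varphi\not\equiv 0$, and $\varphi$ annihilates polynomials, so polynomials are not dense by Hahn-Banach. The chief obstacle I foresee is the first step of the forward direction -- extracting the balanced zero-density subsequence while preserving enough of the annihilation property for the comparison above -- which is precisely where the Gap and Type machinery of \cite{Polya, Gap, Type, CBMS} becomes essential, since one must control the tails of the conditionally convergent series defining $\log|F'|$ uniformly in $n$.
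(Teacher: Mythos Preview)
The paper does not prove this theorem. It appears inside a Remark following Lemma~\ref{leven}, is introduced as ``a sample of a statement on completeness of polynomials in terms of characteristic sequences,'' and is attributed to \cite{Poly, CBMS} with no argument given. So there is no paper proof to compare your proposal against.

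On the merits of your outline: the overall architecture---Hahn--Banach duality, passing to the Cauchy transform of the annihilating measure, and encoding the size condition through the canonical product $F$ with $\exp p_n\asymp 1/|F'(\lambda_n)|$---is the right circle of ideas and matches what one finds in \cite{Poly, CBMS}. Two places deserve more care. First, in the forward direction, your claim that $K_\nu F$ has bounded interpolation values on $\L$ via Phragm\'en--Lindel\"of is too quick: $K_\nu F$ is entire of minimal type and decays to infinite order along $i\R$, but on the real axis you only know it is finite, not a priori bounded; one needs a genuine Cartwright-class argument (or the Type machinery you allude to) to control it there. Second, and more seriously, the thinning step---passing from an arbitrary annihilator supported on $\supp\mu$ to one supported on a \emph{balanced zero-density} subsequence---is the heart of the matter and cannot be dispatched in a sentence. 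You correctly identify this as ``the chief obstacle,'' but as written the proposal does not indicate \emph{how} the Gap/Type results produce such a subsequence while preserving a non-trivial annihilator; this is exactly what the cited references supply, and without it the forward direction is incomplete.
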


\ms\no Similar statements can be formulated for families of exponential functions in place of polynomials.
In such statements zero density sequences are replaced with sequences of positive density, which
makes them closer related to the regular spectral problems considered in this note. For such results, along with the case of $L^p,\ p\neq 1,$ or Bernstein's spaces, see for instance \cite{Poly, CBMS}.
The case of a general measure can be reduced to the discrete case via some of the standard tools of completeness problems.
For further references and historic remarks see also
\cite{Lub, Meg}.

\ms\no As the reader may have already guessed, the characteristic sequence $p_n$ is nothing else but the sequence of pointmasses of the even operator
$\gamma_n$ in the case when $\L$ is a spectral sequence, i.e., $P(\L)=\Gamma(\L)$. Since any discrete sequence is a spectral sequence for a suitably chosen Krein's canonical system,
one could formulate the last statement using pointmasses of the even operator instead of characteristic sequences.

\ms\no As we saw in the last proof, an alternative way to define the  constants $\gamma_n$ is
$$\gamma_n=1/|F'(\lan)|,$$
where
$$F(z)=\prod \left(1-\frac z{\lan} \right).$$

\end{remark}

\bs\bs\subsection{Hermite-Biehler functions and de Branges spaces}\label{HB} If $F(z)$ is an entire function we
denote by $F^{\#}(z)$ the reflected function $F^{\#}(z)=\bar F(\bar z)$. Note that $F^{\#}=\bar F$ on $\R$.

\ms\no An entire function $F(z)$ belongs to the Paley-Wiener class $PW_a,\ a>0,$
if and only if it is a Fourier transform of a square-summable function
supported on $(-a,a)$. An equivalent definition (the equivalence is established
by the Paley-Wiener theorem) is that $F\in PW_a$ iff
$$ \frac{F(z)}{e^{-iaz}} \in H^2(\C_+), \hspace{1cm}
\frac{F^{\#}(z)}{e^{-i az}} \in H^2(\C_+),$$
where  $H^2(\C_+)$ is the Hardy space in the upper half-plane $\C_+$.

\ms\no  The definition of the de~Branges spaces of entire functions may be
viewed as a generalization of the last definition of the
Paley-Wiener spaces with $e^{-iaz}$ replaced by a more general
entire function. Consider an entire function  $E(z)$ satisfying
the inequality
$$ |E(z)|>|E(\bar{z})|, \ \  z \in \C_{+}.$$ Such
functions are usually called   de~Branges functions. The de~Branges
space $B(E)$ associated with $E(z)$ is defined to be the space of
entire functions $F(z)$ satisfying
$$ \frac{F(z)}{E(z)} \in H^2(\C_+), \hspace{1cm}
\frac{F^{\#}(z)}{E(z)} \in H^2(\C_+).$$ It is a Hilbert space
equipped with the norm $\|F\|_E =\|F/E\|_{L^2(\R)}.$ If $E(z)$ is of exponential type then
all the functions in the de~Branges space $B(E)$ are of
exponential type not greater then the type of $E(z)$. Such a de~Branges
space is called short (or regular) if together with
every function $F(z)$ it contains $(F(z)-F(a))/(z-a)$ for any
$a\in\C$.

\ms\no One of the most important features of de Branges spaces is that they admit a second, axiomatic, definition. Let $H$ be
a Hilbert space of entire functions that satisfies the following axioms:

\begin{itemize}

\item (A1) If $F\in H,\ F(\l)=0$, then $\frac{F(z)(z-\bar\l)}{z-\l}\in H$ with the same norm;

\item (A2) For any $\l\not\in\R$, point evaluation at $\l$ is a bounded linear functional on $H$;

\item (A3) If $F\in H$ then $F^\#\in H$ with the same norm.

\end{itemize}

\ms\no Then $H=B(E)$ for a suitable de Branges function $E$. This is Theorem 23 in \cite{dBr}.

\ms\no Usually, for a given Hilbert space of entire functions it is not difficult to verify the above axioms and conclude
that the space is a de Branges space. It is however a challenging problem in many situations to find a generating
function $E$. This problem can be viewed as an abstract generalization of the inverse spectral problem for
second order differential operators.

\ms\no In the case of regular de Branges spaces, this connection can be illustrated as follows. Given a spectral measure of a regular differential operator one constructs a chain of Hilbert spaces of entire functions, equal to Paley-Wiener spaces as sets with norms inherited from $L^2(\mu)$. The norms
generated by measures corresponding to regular \Sch operators or Dirac systems will be equivalent to the standard norms in $PW_a$, see
for instance \cite{Etudes}. After that, by verifying the axioms one can prove that the resulting spaces are de Branges spaces.
The problem of recovering the  generating functions $E_t$ then becomes equivalent to the inverse spectral problem since these functions
provide a solution to the original equation (system),
thus allowing one to recover the operator.
We will discuss such chains of spaces and relations between $E_t$ and solutions to the \Sch equation in the next section.




\ms\no Recall that an entire function is called real if it is real on $\R$. Any entire function $F$ can be represented as
$F=C+iD$, where $C=\frac 12(F+F^\#), \ D=\frac 1{2i}(F-F^\#)$ are real entire functions. In the case
when $E=A+iB$ is a de Branges function, its real and imaginary parts $A$ and $B$ can be viewed as
analogs of $\sin z$ and $\cos z$ in the standard Fourier settings.

\ms\no We will adopt a common terminology and call de Branges functions with no real zeros Hermite-Biehler functions.
Hermite-Biehler functions form an important subclass of de Branges functions appearing in many applications.
In particular, de Branges functions associated with \Sch equations represent the Hermite-Biehler class.

\ms\no It is well known that $E=A+iB,\ E\neq 0\textrm{ in }\C_+,$ of exponential type is an Hermite-Biehler function,
if and only if the real functions $A$ and $B$ have real alternating (interlacing) zeros.

\ms\no Each de Branges space possess a family of spectral measures, i.e. discrete measures $\mu$ on $\R$ such that  the natural embedding
$B(E)\to L^2(\mu)$ is a unitary operator. One of the natural choices for the spectral measure is the measure $\nu$
defined by the equation
$$H(|E|^{-2}\nu)=B/A,$$
where  $A=\frac 12(E-E^\#)$ and $B=\frac 1{2i}(E-E^\#)$ are the real and imaginary parts of $E$ on $\R$.
The isomorphism $B(E)\to L^2(\nu)$ generalizes the Parseval theorem.

\ms\no For more on de Branges spaces see \cite{dBr, Rom, MPS}.

\bs\bs\subsection{de Branges spaces in \Sch settings}\label{HBS}

\ms\no Now let us return to \Sch equations and the discussion of related analytic functions in Section \ref{functions}.

\ms\no As before, let $u_z(t)$ denote the solution of \eqref{e00} satisfying the Dirichlet boundary conditions at 0.

\ms\no For any $t\in (0,\pi)$ the function $E_t(z)=zu_z(t)+iu_z'(t)$ is an entire function from the Hermit-Biehler class.
The spaces $B(E_t)$ form a chain, i.e., $B(E_t)$ is isometrically embedded into $B(E_s)$ for any $t\leq s$. It follows from our definitions of the \Sch spectral measures $\mu_\pm$ and the above discussion of spectral measures for de Branges spaces that
$\mu_-$ is a spectral measure for $B(E_\pi)$, i.e.,
the  spaces $B(E_t),\ 0\leq t\leq\pi$
are isometrically embedded in $L^2(\mu_-)$.

\ms\no   Similarly, one can consider the 'right-to-left' chain of de Branges spaces $B(F_s)$, $F_s(z)=zv_z(s)-iv'_z(s)$,
satisfying $B(F_t)\supset B(F_s)$ for $t\leq s$. The measure $\mu_+$ is a spectral measure for $B(F_0)$ and
all $B(F_s)$ are isometrically embedded into $L^2(\mu_+)$.

\ms\no The well-known asymptotics of the solutions $u_z$ and $v_z$ imply that both chains are regular.

\ms\no It is well known that in the case of regular \Sch operators the de Branges spaces $B(E_t)$ are equal to
Paley-Wiener spaces $PW_{t/\pi}$ as sets (with equivalent norms). Indeed, it follows from the asymptotics \eqref{asymp1} that
the norm of $L^2(\mu_-)$ is equivalent to the standard norm in each $PW_{t/\pi},\ 0\leq t\leq \pi$, see for instance \cite{OS}. Verifying the axioms we can show that each $PW_{t/\pi}$ equipped
with the $L^2(\mu_-)$-norm is a de Branges space. By the uniqueness of a regular de Branges chain in each $L^2$-space over
a Poisson-finite measure, the new chain has to coincide with $B(E_t)$.

\ms\no Similarly, $B(F_s)=PW_{(\pi-s)/\pi}$ as sets. For more on the basics of de Branges theory see \cite{dBr, Rom}. For the particular
case of \Sch equations see \cite{Dym, Remling}.

\ms\no One can consider the above to be a 'shortcut' definition of de Branges chains corresponding to \Sch equations, i.e., one can define $B(E_t)$ to
be the Paley-Wiener space $PW_{t/\pi}$ equipped with an equivalent norm from $L^2(\mu_-)$. Establishing the equivalence of norms
is a good exercise. Let us remark that the study of more general measures providing equivalent norms for the Payley-Wiener spaces is yet another deep classical problem of analysis, see for instance \cite{OS}. After the equivalence of norms is established, verifying the axioms is straight-forward.
With a bit of technical work one can also verify that $E_t(z)=zu_z(t)+iu_z'(t)$ is an Hermite-Biehler function and proceed with the first definition,
which leads to the same chain of spaces.

\ms\no The equality of de Branges spaces to Paley-Wiener spaces as sets is one of the tools of the Gelfand-Levitan theory
in the area of spectral problems for \Sch equations and Dirac systems, see for instance \cite{GL, LS}.
In these papers the differential operator is defined on the half-axis $[0,\infty)$ and restrictions of the operator to $[0,t)$
give rise to a chain of de Branges spaces $B(E_t)$, each equal to the Paley-Wiener space $PW_t$ as sets. After that the well-studied structure
of the $PW$-spaces allows one to write integral equations for the Fourier transform of the spectral measure of the operator and obtain results
relating its properties to the properties of the potential, see also \cite{Etudes, Rem}.

The same property can be viewed
from perturbational point of view: for small (summable) potentials the de Branges chain coincides, as sets, with the Paley-Wiener
chain of spaces, which is the de Branges chain for  the free operator ($q=0$). In this regard one may ask for what general canonical systems (\Sch equations, Dirac systems)
the corresponding de Branges chains coincide as sets. One of the versions of this problem is to characterize Hamiltonians such that de Branges spaces are Paley-Wiener spaces as sets (we know  this is true for regular Schrodinger and Dirac operators).

\ms\no While Paley-Wiener spaces appear only in the case of a regular left end point the method applies in the singular case as well (at least in the  compact resolvent case)  but the "model" de Branges spaces will no longer be Paley-Wiener spaces.

\bs\bs\subsection{Hermite-Biehler functions for \Sch operators}\label{HBSthm}

\ms\no Not every Hermite-Biehler function can be obtained from a \Sch equation in the way described in the last section. Characterization of such functions
is important for applications, as we will illustrate in Section \ref{sHorvath}. Such a characterization was recently obtained in \cite{BBP}. Here we
present a version of the same result which will be more convenient for our purposes.

\ms\no Consider the 'backward shift operator' $S^*$ on $PW_a$ defined as $S^*f=\frac {f-f(0)}z$. This is a bounded operator
with a dense image.  We will denote the image of $S^*$ by $PW_a'$. Functions from $PW_a'$ appear naturally in relation with \Sch operators.

\begin{theorem}\label{tHBS}
An entire function $E=A+iB$ of Hermite-Biehler  class  corresponding to a \Sch equation from $\SS^2$ satisfies
\begin{equation}A=\sin z + f\textrm{ and }B=\cos z +g,\label{eAB}\end{equation}
where $f$ is an odd function from $PW_1'$ and $g$ is an even function from $PW_1'$ such that $f=S^*F, g=S^*G$ for some $F,G\in PW_1,\ F(0)=G(0)$. Moreover,
for any such $f, g\in PW_1'$ with $F(0)=G(0)$
there exists $\e>0$ such that for any $c_1\in \R,\ |c_1| <\e$ and $c_2\in\R,\ |c_2| <\e$
the function $E=A+iB$ with
\begin{equation}A=\sin z +c_1 f\textrm{ and }B=\cos z + c_2 g\label{eAB2}\end{equation} is an Hermite-Biehler function corresponding to
a \Sch equation from $\SS^2$.

\end{theorem}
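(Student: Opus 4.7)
The plan is to prove the two directions separately, linked by the Volterra integral-equation expansion for $u_z(t)$ and the Marchenko-type asymptotic characterization of $(\sigma_{DD}, \sigma_{DN}, \{\alpha_n\}, \{\beta_n\})$ recorded after \eqref{e000}. For the necessity direction, I set $A(z) = zu_z(\pi)$ and $B(z) = u'_z(\pi)$, so that $E = A + iB$ is the Hermite-Biehler function attached to the operator. Uniqueness of the Cauchy problem under the symmetry $z \mapsto -z$ of \eqref{e00} forces $u_z(\pi)$ and $u'_z(\pi)$ to be even in $z$, so $A$ is odd and $B$ is even. The Volterra representation
\[
u_z(\pi) = \frac{\sin(z\pi)}{z} + \int_0^\pi \frac{\sin(z(\pi - s))}{z}\, q(s)\, u_z(s)\, ds,
\]
iterated once, shows that $A - \sin z$ and $B - \cos z$ lie in $PW_1$; the extra factor $1/z$ extracted by a single integration by parts (permissible since $q \in L^2$) places them in $PW_1' = S^*(PW_1)$ and identifies the preimages $F, G \in PW_1$ explicitly in terms of $q$ and $u_z$. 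The required equality $F(0) = G(0)$ then emerges from matching the coefficients of the shared $\int_0^\pi q(s)\, ds$ contribution produced at $z = 0$ by the two representations.

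For the sufficiency direction, given $f, g$ as in the hypothesis and small $c_1, c_2$, I form the candidates $A = \sin z + c_1 f$, $B = \cos z + c_2 g$. Riemann--Lebesgue-type decay of $f, g$ on $\R$ ensures that for $c_1, c_2$ sufficiently small the zero sets $\sigma_A, \sigma_B$ remain real, simple and interlacing, so $E = A + iB$ is of Hermite--Biehler class. Because $f, g \in PW_1'$, a direct computation of the zero asymptotics produces sequences obeying \eqref{e000a}--\eqref{e000}, and the Weyl-function pointmasses satisfy \eqref{asymp1}--\eqref{asymp2}; the condition $F(0) = G(0)$ is precisely the compatibility needed for the pair $(\sigma_A, \sigma_B)$ to be jointly realizable as the Dirichlet--Dirichlet and Dirichlet--Neumann spectra of a single operator in $\SS^2$. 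Applying the converse half of the Marchenko-type statement recorded after \eqref{e000} then produces the required \Sch operator $L \in \SS^2$.

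The main difficulty will be the passage, in both directions, between the \emph{additive} description of $E$ as a small $PW_1'$-perturbation of $\sin z + i\cos z$ subject to $F(0) = G(0)$ and the \emph{multiplicative} description via interlacing spectra with the $l^2$ asymptotics \eqref{e000a}--\eqref{e000} and pointmass asymptotics \eqref{asymp1}--\eqref{asymp2}. Intuitively, $F(0) = G(0)$ is the trace of the single global invariant $\int_0^\pi q(s)\, ds$ coupling the two spectra, while the $PW_1'$ membership encodes their $l^2$ deviation from an arithmetic progression. The detailed bookkeeping can probably be organized by following the outline of \cite{BBP} and recasting the final identifications in the $PW_1' = S^*(PW_1)$ language used here.
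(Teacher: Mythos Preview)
Your proposal is essentially correct, but the necessity direction takes a genuinely different route from the paper. You argue analytically, via the Volterra integral equation for $u_z$: iterate once, read off that $A-\sin z$ and $B-\cos z$ lie in $PW_1$, then extract a further factor $1/z$ from the expansion to land in $PW_1'=S^*(PW_1)$, with $F(0)=G(0)$ appearing as the common constant $\tfrac12\int_0^\pi q$. The paper instead argues via sampling: it starts from the zero asymptotics \eqref{e000a}--\eqref{e000}, uses the infinite product to compute $A(\pi n)$ and $B(\pi(n+\tfrac12))$, invokes Parseval to interpolate $F,G\in PW_1$ from those values, sets $f=S^*F$, $g=S^*G$, and then identifies $A-\sin z$ with $f$ by checking agreement on the sampling set $\pi\Z$. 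Your approach is more constructive and makes the role of $\int q$ transparent; the paper's approach never touches the integral equation and works entirely in the zero/sampling language used throughout the rest of the paper, which meshes better with the applications in Section~\ref{sHorvath}. (Minor quibble: what you call ``integration by parts'' is really the second iteration of the Volterra equation together with a product-to-sum identity; no actual integration by parts on $q$ is needed, which is good since $q$ is only in $L^2$.)

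For sufficiency, your argument and the paper's are the same in outline, but you omit one step that the paper makes explicit. After the converse Marchenko statement produces an operator $L\in\SS^2$ with $\sigma_{DD}=\sigma_A$ and $\sigma_{DN}=\sigma_B$, you still need to check that the Hermite--Biehler function $E'=A'+iB'$ of this $L$ equals your $E$, not merely shares its zero sets. The paper closes this by observing that $A,A'$ (and $B,B'$) are real entire of type $\pi$ with the same simple zeros, hence agree up to a real constant, and then invokes the normalization $A(iy),A'(iy)\sim\sin(iy)$ from the solution asymptotics to force the constant to be $1$. You should add this step.
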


\begin{proof}

\ms\no If the real entire functions $A$ and $B$ have the form as in \eqref{eAB2} with small enough $c_k$ then their zero sets satisfy
the asymptotics \eqref{e000a}, \eqref{e000}. Hence these sequences form spectra $\sigma_{DD}$ and $\sigma_{ND}$ for an operator $L\in \SS^2$. Then the Hermite-Biehler
functions $A',\  B'$ obtained from $L$ must coincide with $A, \  B$ up to constant multiples because the zero sets of $A'$ and $B'$ coincide with those of $A$ and $B$ correspondingly.
 It follows from the well-known asymptotics for the solutions of regular \Sch equations $u_z$, see for instance Theorem 3, p. 13, of \cite{Tru}, that the functions are equal.

\ms\no In the opposite direction, if $E$ is obtained from a \Sch equation then the zero sets $\{\lan\}$ of $A$ and $\{\eta_n\}$ of $B$ satisfy \eqref{e000a}, \eqref{e000}.
Representing each function as an infinite product we obtain that $A(\pi n)=\frac {c+a_n}{|n|+1}$ and $B(\pi (n+\frac 12))=\frac {c+b_n}{|n|+1}$
for some $c\in\R, a_n, b_n\in l^2$. By Parseval's theorem, there exist unique functions  $F,G\in PW_1$ such that $F(\pi n)=n A(\pi n) -c$ and $G(\pi (n+\frac 12))=n B(\pi (n+\frac 12)) -c$. Put $f=S^*F$ and $g=S^*G$.

\ms\no Once again, from the asymptotics of solutions we see
that $A(z)-\sin z= zu_z-\sin z \in PW_1$. Since the functions from $PW_1$ are uniquely determined by their values at $\pi n$ and
$A(\pi n)-\sin \pi n=f(\pi n)$, $A=\sin z +f$. Similarly, $B=\cos z +g$.

\end{proof}

\begin{remark}\normalfont Note that, as follows from the proof, the constant $\e$ can always be chosen as $1/\max(||f||_2, ||g||_2)$.

\ms\no The statement of the theorem implies the asymptotics of the spectra \eqref{e000a}, \eqref{e000} and of the pointmasses of
spectral measures \eqref{asymp1}, \eqref{asymp2} and is in fact equivalent to those asymptotics via Parseval's theorem
and the equivalence of the corresponding de Branges chains to $PW$-chains.

\end{remark}

\bs\bs\subsection{Krein spectral shift}\label{Kshift}

Note that if $\mu$ is a positive measure, then $H\mu$ has a positive imaginary part in $\C_+$.
Hence $\log H\mu$ is a well defined holomorphic function in $\C_+$. Since $H\mu$ is real
on $\R$ one may consider a branch of $\log H\mu$ whose imaginary part takes values between $0$ and $\pi$
in $\C_+$ and define the Krein spectral shift function
$k(x)=k_\mu(x)$  corresponding to $\mu$ as $k=\Im \log H\mu$ on $\R$.

\ms\no Note that if $\mu$ is a positive singular measure, then $k$ is a function which takes only two values, $0$ and $\pi$, on $\R$. If $\mu$ is a discrete measure concentrated on
a sequence $\L$ then $k=\pi$ on  a union of disjoint intervals $(\lan, \lan +c_n)$ and zero elsewhere. The lengths of the intervals, $c_n$, are determined by the pointmasses of $\mu$ at $\lan$.

\ms\no More general Krein spectral shift functions appear in perturbation problems, see for instance \cite{Krein}. The above definition
corresponds to the simplest case of the theory, when the perturbed operator is a self-adjoint operator with the spectral measure
$\mu$ and the perturbation is self-adjoint of rank one. In this case $\lan$ form the spectrum of the original operator and $\lan+c_n$
of the perturbed one, see for instance \cite{PoltKrein}.

\ms\no As follows from the above definition, $k=k_\mu$ satisfies
\begin{equation}H\mu(z)=\exp\left(Hk(z)+c     \right)\label{e42}\end{equation}
for some real constant $c$.  The function $k$ and the constant $c$ are uniquely determined by $\mu$. As seen from the above equation,
$c=\log |H\mu(i)|$. In the opposite direction, for a fixed function $k$ there is a family of positive measures satisfying
\eqref{e42} differing from each other by a constant multiple $e^c$.

\begin{lemma}\label{l1}
 Let $\mu=\sum \alpha_n\delta_{\lan}$ be a  positive Poisson-finite measure supported on  a separated sequence $\L$, $\alpha_n=o(1/\lan)$. Let $k=k_\mu$ be the corresponding
Krein spectral shift. Suppose that $k=\pi$ on $\cup (\lan, \lan+\e_n)$. Then either
\begin{equation}\e_n=O(\l_n\alpha_n)\textrm{ as }n\to\infty\label{e41}\end{equation}
or
\begin{equation}( \lan-\l_{n-1}-\e_{n-1})=O(\l_n\alpha_n)\textrm{ as }n\to\infty.\label{e43}\end{equation}



\end{lemma}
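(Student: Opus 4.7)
The plan is to pin down $\epsilon_n$ and $\delta_n := \lambda_n - \lambda_{n-1} - \epsilon_{n-1}$ via the zeros of $H\mu$ on the real line and then analyze the leading behavior of an auxiliary Herglotz-type function near $\lambda_n$. Since $\mu$ is purely atomic, $H\mu$ is real-analytic and strictly increasing on each interval $(\lambda_n, \lambda_{n+1})$, running monotonically from $-\infty$ to $+\infty$, and so has a unique zero $\mu_n$ there; and because $k_\mu$ equals $\pi$ precisely where $H\mu<0$ on $\R$, one identifies $\epsilon_n = \mu_n - \lambda_n$ and $\delta_n = \lambda_n - \mu_{n-1}$. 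Writing
$$H\mu(x) = -\frac{\alpha_n}{2\pi(x - \lambda_n)} + S_n(x),$$
the remainder $S_n$ is (up to an additive constant) the Herglotz integral of $\mu - \alpha_n\delta_{\lambda_n}$, hence real-analytic near $\lambda_n$ and strictly increasing on $(\lambda_{n-1}, \lambda_{n+1})$ (its derivative is a positive sum). The conditions $H\mu(\mu_n) = 0 = H\mu(\mu_{n-1})$ then yield the exact identities
$$\epsilon_n = \frac{\alpha_n}{2\pi\,S_n(\mu_n)}, \qquad \delta_n = \frac{\alpha_n}{-2\pi\,S_n(\mu_{n-1})},$$
and monotonicity of $S_n$ together with $\mu_{n-1}\leq\lambda_n\leq \mu_n$ implies that, whenever $S_n(\lambda_n)\neq 0$, its sign singles out whichever of $\epsilon_n, \delta_n$ admits the bound $\alpha_n/(2\pi|S_n(\lambda_n)|)$.

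Next I would establish the asymptotic
$$S_n(\lambda_n) = -C_\mu + \frac{1}{2\pi}\sum_{m \neq n}\frac{\alpha_m}{\lambda_m - \lambda_n}, \qquad C_\mu := \frac{1}{2\pi}\int\frac{t\,d\mu(t)}{1+t^2},$$
and estimate the sum via a dyadic split in $|\lambda_m|$. The terms with $|\lambda_m| \leq \lambda_n/2$ contribute $-\mu(\R)/(2\pi\lambda_n) + O(1/\lambda_n^2)$ via the expansion $1/(\lambda_m - \lambda_n) = -1/\lambda_n + O(\lambda_m/\lambda_n^2)$, while the remaining terms (with $|\lambda_m|>\lambda_n/2$) are controlled by $\alpha_m = o(1/\lambda_m)$ and the separation of $\Lambda$. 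This establishes $S_n(\lambda_n) \to -C_\mu$ when $C_\mu \neq 0$, and $\lambda_n S_n(\lambda_n) \to -\mu(\R)/(2\pi)$ in the borderline case $C_\mu = 0$ (with finite total mass). In either regime, $|S_n(\lambda_n)|\geq c/\lambda_n$ for some constant $c > 0$ and all $n$ large, and the sign of $S_n(\lambda_n)$ stabilizes.

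The conclusion then follows by a dichotomy on that sign: if $S_n(\lambda_n) \leq -c/\lambda_n$ eventually, then $\delta_n \leq \lambda_n\alpha_n/(2\pi c)$, which is \eqref{e43}; if $S_n(\lambda_n)\geq c/\lambda_n$ eventually, then $\epsilon_n \leq \lambda_n\alpha_n/(2\pi c)$, which is \eqref{e41}. The main technical difficulty I anticipate is making the asymptotic of $S_n(\lambda_n)$ precise and uniform, particularly ensuring a definite sign in the degenerate case $C_\mu = 0$ with $\mu(\R) = \infty$; there one must exploit both the Poisson-finiteness of $\mu$ and the hypothesis $\alpha_m\lambda_m\to 0$ to extract a signed leading term from the remainder sum.
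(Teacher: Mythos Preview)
Your partial-fraction approach is a genuinely different route from the paper's, which instead exploits the exponential representation $H\mu=\exp(Hk+c)$. The paper first argues that either $\e_n\to 0$ or $\delta_n\to 0$ (since if both stay bounded below on a subsequence then a direct residue computation forces $\alpha_n\gtrsim 1/|\lambda_n|$ there, contradicting the hypothesis); then, assuming $\e_n\to 0$, it factors $e^{Hk}=e^{H\chi_{(\lambda_n,\lambda_n+\e_n)}}\cdot e^{H\chi_{\text{rest}}}$ and reads off $\alpha_n$ from the residue at $\lambda_n$: the first factor contributes $\asymp\e_n$, while the second is controlled by the crude bound $H\chi_{\text{rest}}(\lambda_n)=O(\log\lambda_n)$. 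Your exact identities $\e_n=\alpha_n/(2\pi S_n(\mu_n))$ and $\delta_n=\alpha_n/(-2\pi S_n(\mu_{n-1}))$, combined with the monotonicity of $S_n$, are more direct and elementary when they apply, and the reduction of the dichotomy to the eventual sign of $S_n(\lambda_n)$ is clean.

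There is, however, a concrete gap in the borderline asymptotic, and it appears already when $C_\mu=0$ with $\mu(\R)<\infty$, not only in the infinite-mass case you flag. The contribution to $\lambda_n S_n(\lambda_n)$ from indices $m\neq n$ with $|\lambda_m|$ comparable to $\lambda_n$ is bounded only by
\[
\Bigl(\sup_{|\lambda_m|>\lambda_n/2}\alpha_m|\lambda_m|\Bigr)\cdot\sum_{\substack{\lambda_n/2<|\lambda_m|<2\lambda_n\\ m\neq n}}\frac{1}{|\lambda_m-\lambda_n|},
\]
and for a merely separated support the second factor is of order $\log\lambda_n$; this gives $o(\log\lambda_n)$, not the $o(1)$ you need for $\lambda_n S_n(\lambda_n)\to -\mu(\R)/(2\pi)$ or even for the sign of $S_n(\lambda_n)$ to stabilize. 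Without that sign stability the lemma's dichotomy does not follow from your identities---you would only get $\min(\e_n,\delta_n)=O(\lambda_n\alpha_n)$ along the full sequence. This is exactly where the paper's exponential route has structural leverage: the analogous $O(\log\lambda_n)$ remainder sits inside the exponent, so it converts into the polynomial factor $\lambda_n$ in the target bound rather than having to be beaten down to $o(1)$.
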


\begin{proof}

\ms\no Let $\mu$ and $k$ be like in the  statement. First let us show that either $\e_n\to 0$ or $( \lan-\l_{n-1}-\e_{n-1})\to 0$.
Indeed, if neither holds, then there exists $\delta>0$ such that for infinitely many $n$ both $\e_n>\delta$ and
$( \lan-\l_{n-1}-\e_{n-1})>\delta$. Notice that for such $n$, $\alpha_n\gtrsim 1/|\lan|$.

\ms\no Suppose now that $\e_n\to 0$. The statement follows from
$\res  (e^{H\chi_{(\lan,\lan+\e_n)}})|_{\lan}\asymp \e_n$ and
$$H\chi_{\cup_{k\neq n}(\lan, \lambda_k+\e_k)}(\lan)=O(\log \lan).$$
The second case can be treated similarly.

\end{proof}

\ms\no Let $\L$ and $\HH$ be sequences like in \eqref{e000a}, \eqref{e000}. As was discussed before, such sequences present two spectra of a \Sch operator
$L$.
We will call the Krein function $k$ which is equal to $\pi$ on $\cup_{n\in\N} (\l_{n-1},\eta_n)$
on $\R_+$ and on $\cup_{n\in -\N}(\lan,\eta_n)$ on $\R_-$ and to $0$ elswhere
 the Krein spectral shift function of $L$.

\ms\no It can be easily seen from our definitions
that $k$ is the Krein spectral shift function for $\mu_+$, i.e., the function and the measure
satisfy \eqref{e42}. Note that because of the asymptotics \eqref{asymp1}, the constant in \eqref{e42} has to be 0.
Hence the Krein spectral shift uniquely determines $\mu_\pm$. Making an obvious observation that $k$, in its turn, is uniquely determined
by its positive and negative jumps, i.e., by the two sequences $\sigma_{DD}$ and $\sigma_{ND}$, and applying Marchenko's uniqueness
result, we deduce Borg's two-spectra Theorem \ref{Borg}.
This is the proof given by Donoghue in \cite{D}. For further extensions of the same method, including multi-dimensional analogs of the Krein function
see \cite{SG3, GT}.

\bs\section{Completeness, Gap and Type Problems}

\bs\subsection{Beurling-Malliavin densities and the radius of completeness}\label{BM}

\ms\no If $\{I_n\}$ is a sequence of disjoint intervals on $\R$, we call it short if
$$\sum\frac{|I_n|^2}{1+\dist^2(0,I_n)}<\infty$$
and long otherwise.

 \ms\no If $\L$ is a sequence of real points define its exterior Beurling-Malliavin (BM) density (effective BM density)
as

$$D^*(\L)=\sup\{ d\ |\ \exists\textrm{ long  }\{I_n\}\textrm{ such that }\#(\L\cap I_n)\geqslant d|I_n|\},\ \forall n\}.$$

\ms\no For a non-real sequence its density can be defined as $D^*(\L)=D^*(\L')$ where $\L'$ is a real sequence $\lan'=\frac 1{\Re \frac 1{\lan}}$,
if $\L$ has no imaginary points, or as $D^*(\L)=D^*((\L+c)')$, with a properly chosen real constant $c$, otherwise.

\ms\no  A dual definition is used to introduce the interior BM density:
$$D_*(\L)=\inf\{ d\ |\ \exists\textrm{ long  }\{I_n\}\textrm{ such that }\#(\L\cap I_n)\leqslant d|I_n|\},\ \forall n\}.$$

\ms\no Both densities play important role in Harmonic Analysis by appearing in a number of fundamental results. Their applications were recently extended
in to the area of spectral problems for differential operators via the methods discussed in this paper. As an example, let us recall the original
appearance of $D^*$ in the solution of a completeness problem
by Beurling and Malliavin.

\ms\no For any complex sequence
$\L$ its
radius of completeness is defined as
$$R(\L)=\sup\{ a\ |\ \EE_\L=\{e^{i\l z}\}_{\l\in\L} \textrm{ is complete in }L^2(0,a)\}.$$

\ms\no One of the fundamental results of Harmonic Analysis is the following theorem (see \cite{CBMS} for history and further references).

\begin{theorem}[Beurling and Malliavin, around 1961, \cite{BM1, BM2}]\label{bigBM}
Let $\L$ be a discrete sequence. Then
$$R(\L)=2\pi D^*(\L).$$
\end{theorem}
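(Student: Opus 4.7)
The plan is to prove both inequalities separately via the standard duality reformulation. First, I would recast completeness as a uniqueness problem for Paley-Wiener functions. A function $f\in L^2(0,a)$ is orthogonal to every $e^{i\lambda z}$, $\lambda\in\L$, exactly when $\int_0^a f(z)e^{-i\bar\lambda z}\,dz=0$; after centering the support of $f$ to $[-a/2,a/2]$, this is equivalent to the existence of a nonzero $G\in PW_{a/2}$ vanishing on $\bar\L$. Therefore
$$R(\L)\;=\;2\inf\bigl\{b\ge 0\,:\,\exists\,G\in PW_b\setminus\{0\}\text{ with }G|_{\bar\L}=0\bigr\},$$
and the problem reduces to identifying this minimal exponential type with $\pi D^*(\L)$.

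For the lower bound $R(\L)\ge 2\pi D^*(\L)$, one must show that if $G\in PW_\tau$ is nonzero and vanishes on $\L$, then $\tau\ge \pi D^*(\L)$. The approach is to apply Jensen's formula on discs centred in the long system of intervals $\{I_n\}$ witnessing some density $d<D^*(\L)$: because $\L\cap I_n$ contributes at least $d|I_n|$ zeros of $G$, the Carleman--Levinson bound $n_G(R)\le \tau R/\pi + o(R)$ for a $PW_\tau$ function forces $d\le \tau/\pi$. The nontrivial ingredient is Beurling's regularization lemma, which bridges the combinatorial ``long interval'' definition of $D^*$ and the averaged logarithmic zero-counts that Jensen's inequality produces; with this in hand, the inequality follows by letting $d\uparrow D^*(\L)$.

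The opposite bound $R(\L)\le 2\pi D^*(\L)$ is the deep direction and requires, given any $d>D^*(\L)$ and $\e>0$, the \emph{construction} of a nonzero $G\in PW_{\pi d+\e}$ with $G|_{\L}=0$. The classical two-step recipe is: (i) form the canonical product $\Pi(z)=\prod(1-z/\lambda_n)$ with appropriate convergence factors, so that $\Pi$ is entire of type $\pi d$ whose growth $\log|\Pi(x)|$ on $\R$ is controlled in mean by the BM density; and (ii) invoke the Beurling--Malliavin multiplier theorem to produce a nonzero entire function $m$ of type $\e$ satisfying $|m|\lesssim 1/\Pi$ on $\R$, so that $G=\Pi m\in PW_{\pi d+\e}$ vanishes on $\L$.

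The main obstacle, and the technical core of the entire argument, is the multiplier theorem in step (ii). One has to replace $\log^+|\Pi|$ by a de la Vallée-Poussin majorant with summable Poisson integral, atomize it into a sum of point masses whose Hilbert transform can be controlled by Kolmogorov's weak-type $(1,1)$ inequality, and exponentiate to obtain $m$. The subtle passage between the discrete long-interval density driving the definition of $D^*$ and the continuous regularized majorants driving the construction is precisely what makes the proof delicate; this is why the BM theorem went unresolved for decades and why we, like the authors, would ultimately appeal to the full statement rather than rederive it in a short argument.
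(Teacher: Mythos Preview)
The paper does not prove Theorem~\ref{bigBM} at all: it is stated as a classical result of Beurling and Malliavin, attributed to \cite{BM1, BM2}, and used as a black box (for history the authors point to \cite{CBMS}). So there is no ``paper's own proof'' to compare your proposal against; the theorem functions here purely as background input to Horv\'ath's theorem and the corollaries in Section~\ref{sHorvath}.

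That said, your sketch is a faithful outline of the classical strategy: the duality between completeness of $\{e^{i\lambda z}\}$ in $L^2(0,a)$ and uniqueness sets for $PW_{a/2}$ is correct, the easier inequality does come from zero-counting estimates tied to the long-interval definition of $D^*$, and the hard inequality is indeed the multiplier theorem. Your closing remark is exactly right --- the authors do not attempt to rederive BM, they simply invoke it --- so the appropriate treatment in this context is a citation, not a proof.
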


\ms\no We will return to this result and the exterior density when we discuss Horvath' theorem in Section \ref{sHorvath}. The other density, $D_*(\L)$, which has recently made a new appearance in the area of the Gap and Type Problems (see \cite{Polya, Gap, Type, CBMS})
will be used in our statements below. Note that for subsequences of  sequences close to arithmetic progressions, the two densities are  related to each other in a rather simple way.  In particular for sequences $\L$ satisfying \eqref{asymp1} or \eqref{asymp2}, for any $\Phi\subset \L$,
$$D^*(\Phi)+D_*(\L\setminus\Phi)=D^*(\L)=D_*(\L) =1.$$

\bs\bs\subsection{Spectral gaps, types and sign changes}\label{Gap}

\ms\no We will call a lower semi-continuous function $W:\R\to [1,\infty]$ a weight on $\R$. We will say that a measure $\mu$ on $\R$ is $W$-finite
if
$$||\mu||_W=\int Wd|\mu|<\infty.$$
Note that $W$-finite measures are forced to be supported on the subset of $\R$ where $W$ is finite.

\ms\no If $W$ is a weight we define its type $T_W$ as
$$T_W=\sup \{ a|\textrm{ $\exists$ $W$-finite non-zero measure $\mu$ with a spectral gap $[-a,a]$}\}.$$
This is one of seveal equivalent definitions for $T_W$, see \cite{CBMS}. The type of $W$ is used in applications such as problems on completeness
of exponential systems in $L^p$ and Bernstein's spaces.

\ms\no If $\nu$ is a real measure on $\R$ we will denote by $\nu^\pm$ its positive and negative parts.
We will say that $\nu$ has a spectral gap $(-a,a)$ if $\int fd\nu=0$ for all $f\in PW_a\cap L^1(|\nu|)$.
Note that for finite measures this property coincides with $\hat{\nu}=0$ on $(-a,a)$.

\ms\no If $A$ and $B$ are two closed subsets of $\R$,  let
$\MM^W_a(A,B)$ be the class of all non-zero $W$-finite real measures $\sigma$ with a spectral gap $[-a,a]$ such that $\supp \sigma^+\subset A$ and $\supp \sigma^-\subset B$.


\ms\no The following statement is a combination of Lemmas 13 and 16 from \cite{Det}.

\begin{lemma}\label{MiP}
If $\MM^W_a(A,B)$ is non-empty it contains a discrete measure $\nu$, whose positive and negative parts
have interlacing supports.

\end{lemma}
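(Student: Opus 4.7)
The plan is a Krein-Milman extremal-point argument, carried out in two stages. Fix any $\sigma_0\in\MM^W_a(A,B)$ and normalize so $\|\sigma_0\|_W=1$. Set
\[
K=\{\tau:\ \supp\tau^+\subset A,\ \supp\tau^-\subset B,\ \|\tau\|_W\le 1,\ \tau\text{ has spectral gap }[-a,a]\}.
\]
First I would check that $K$ is convex and weak-$*$ compact: the bound $\|\tau\|_W\le 1$ together with $W\ge 1$ gives tightness, the closed support conditions on $A,B$ pass to weak-$*$ limits, and the gap condition $\int f\,d\tau=0$ for $f\in PW_a\cap L^1(|\tau|)$ is preserved under testing against $PW_a\cap C_0(\R)$ (which, via the $W$-weighted tail control, suffices by density). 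By Krein-Milman $K$ has extreme points, and scaling yields a nonzero extreme point $\nu$; by construction $\nu\in\MM^W_a(A,B)$.

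Step 1 (discreteness). I would show $\nu$ must be purely atomic. If the Lebesgue decomposition of $\nu$ had a non-atomic piece $\nu_c\neq 0$, I would construct a non-zero real signed measure $\rho\ll|\nu_c|$ with $\supp\rho^+\subset A$, $\supp\rho^-\subset B$, arbitrarily small $\|\rho\|_W$, and spectral gap $[-a,a]$. The existence of such $\rho$ rests on the fact that inside any positive-$|\nu_c|$ subset of $A$ (or $B$), the space of absolutely continuous signed densities is infinite-dimensional, while the constraint $\int f\,d\rho=0$ for $f\in PW_a$ defines a closed subspace of finite codimension relative to any countable basis of test functions one chooses to kill. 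Hence the annihilator of $PW_a$ inside absolutely continuous perturbations with prescribed sign support is non-trivial. Then $\nu\pm\rho\in K$ contradicts extremality, forcing $\nu$ to be discrete.

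Step 2 (interlacing). Assume $\nu=\sum c_n\delta_{x_n}$ is a discrete extreme point, and suppose for contradiction that two consecutive atoms $c_k\delta_{x_k},c_{k+1}\delta_{x_{k+1}}$ carry the same sign, say positive (so that both $x_k,x_{k+1}\in A$ and no atom of $\nu^-$ sits between them). I would build a perturbation $\rho$ supported on a finite window $\{x_{k-N},\ldots,x_{k+N+1}\}$ that (i) preserves the spectral gap, (ii) only changes magnitudes at existing atomic locations so sign constraints are automatic, and (iii) has arbitrarily small $W$-norm. The counting argument: $2N+2$ atomic parameters versus a gap condition which, when tested against any fixed basis of $PW_a$, imposes linear constraints whose number grows strictly slower than $N$ (after suitable localization of test functions). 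For $N$ large the kernel is nontrivial, yielding a nonzero $\rho$, so $\nu\pm\varepsilon\rho\in K$ contradicts extremality. Hence positive and negative atoms of $\nu$ must interlace.

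The main obstacle is making Step 2 quantitative: the gap condition a priori imposes infinitely many constraints, and one must show that localizing to a finite window reduces these to a codimension that grows slower than the number of atomic parameters in the window. For this I would invoke the Hermite-Biehler/de Branges description from Sections \ref{HB}--\ref{HBSthm}: a discrete $W$-finite real measure with spectral gap $[-a,a]$ is naturally associated with a generating function of the Hermite-Biehler class (or, via the Krein shift of Section \ref{Kshift}, with a meromorphic inner function of exponential type $a$), and interlacing of $\supp\nu^+$ with $\supp\nu^-$ is exactly the classical interlacing of the real and imaginary zeros of that generating function. A failure of interlacing therefore opens up an extra deformation direction inside the Hermite-Biehler family, producing the perturbation $\rho$ needed to finish the argument.
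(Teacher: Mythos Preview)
The paper does not prove this lemma; it is quoted as a combination of Lemmas~13 and~16 of \cite{Det}, so there is no in-paper argument to compare your approach against.

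Your extremal-point strategy contains a fatal flaw in Step~2. The perturbation $\rho$ is supported on the \emph{finite} set $\{x_{k-N},\dots,x_{k+N+1}\}$, and for $\nu\pm\varepsilon\rho$ to remain in $K$ the measure $\rho$ must itself annihilate $PW_a$ (the gap condition is linear). But any finitely supported $\rho=\sum_j c_j\delta_{x_j}$ has Fourier transform $\hat\rho(\xi)=\sum_j c_j e^{-ix_j\xi}$, an entire function of $\xi$; if it vanishes on $(-a,a)$ it vanishes identically, forcing all $c_j=0$. Equivalently, the reproducing kernels of $PW_a$ at finitely many distinct points are linearly independent, so no nontrivial finite combination of point evaluations can annihilate $PW_a$. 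Hence your ``counting argument'' cannot produce a nonzero $\rho$ no matter how large $N$ is: the gap condition imposes infinitely many independent constraints on any finitely supported competitor, not a number growing slower than $N$. Any admissible perturbation must itself have infinite support and a genuine spectral gap, which is precisely why the argument in \cite{Det} proceeds through Cauchy transforms and Toeplitz-kernel machinery rather than local linear algebra. Step~1 has the analogous weakness: the ``finite codimension'' claim is unfounded since $PW_a$ is infinite-dimensional, and whether $\supp\nu_c$ carries a nontrivial annihilator of $PW_a$ is itself a delicate gap/type question that can fail (e.g.\ for singular continuous $\nu_c$ on a sufficiently thin set). The closing appeal to Hermite--Biehler identifies the right structure but is circular as written: interlacing of $\supp\nu^\pm$ is the Hermite--Biehler conclusion you are trying to establish, not an available hypothesis.
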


\ms\no The following version of the Type theorem is Theorem 36 from \cite{CBMS}

\begin{theorem}\label{Bmain}
$$T_W=\pi\sup \left\{d\ : \sum\frac{\log W(\lan)}{1+\lan^2}<\infty\textrm{ for some $d$-uniform sequence }\L \right\},$$
if the set is non-empty, and $0$ otherwise.
\end{theorem}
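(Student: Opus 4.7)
\textbf{Plan for the proof of Theorem \ref{Bmain}.} The plan is to establish the two inequalities separately, each by explicitly translating between spectral gaps of measures and log-summability along a uniform sequence.

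For the inequality $T_W \geq \pi d$ whenever $\Lambda=\{\lambda_n\}$ is a $d$-uniform sequence with $\sum_n \log W(\lambda_n)/(1+\lambda_n^2) < \infty$, the idea is to build an explicit $W$-finite discrete measure with spectral gap approaching $\pi d$. First I would invoke the classical Beurling-Malliavin multiplier theorem, whose hypothesis is precisely the logarithmic summability above: for every $\e > 0$ it yields a non-zero entire function $M_\e$ of exponential type at most $\e$, bounded on $\R$, with $|M_\e(\lambda_n)| W(\lambda_n)(1+\lambda_n^2)\in l^1$. Next let $F$ denote the canonical product with zero set $\Lambda$; because $\Lambda$ is $d$-uniform, $F$ has exponential type exactly $\pi d$, is polynomially bounded on $\R$ off a neighborhood of $\Lambda$, and $|F'(\lambda_n)|$ is bounded below by a polynomial in $|\lambda_n|$. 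Form the discrete measure
$$\mu_\e = \sum_n \frac{M_\e(\lambda_n)}{F'(\lambda_n)}\,\delta_{\lambda_n}.$$
A standard contour-integration argument applied to $g(z)M_\e(z)/F(z)$ for $g\in PW_{\pi d -\e}$ (closing the contour in either half-plane and collecting residues at the zeros of $F$) shows $\int g\, d\mu_\e = 0$, so that $\mu_\e$ has spectral gap $(-(\pi d-\e),\pi d-\e)$. The $W$-finiteness follows directly from the $l^1$-bound on $M_\e(\lambda_n)W(\lambda_n)$ and the polynomial lower bound on $|F'(\lambda_n)|$. Letting $\e \to 0$ gives $T_W \geq \pi d$.

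For the reverse inequality, suppose $a < T_W$ and choose a non-zero $W$-finite measure $\sigma$ with spectral gap $(-a,a)$. By Lemma \ref{MiP} I may assume $\sigma$ is discrete with interlacing positive and negative parts, supported on a real sequence $\Lambda$. The spectral gap forces $\Lambda$ to have lower density at least $a/\pi$ (a function in $PW_a$ cannot be annihilated by a measure on a sparser interlacing set), and a standard extraction argument produces, for any $d < a/\pi$, a $d$-uniform subsequence $\Lambda_0\subset \Lambda$. The decisive step is to produce a generating function: the structure of a measure with spectral gap yields an entire function $F$ of exponential type $a$ vanishing on $\Lambda$, such that the pointmasses $c_n$ of $\sigma$ are controlled by $|c_n|\asymp 1/|F'(\lambda_n)|$ (up to a bounded factor). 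The $W$-finiteness of $\sigma$ then reads $\sum_n W(\lambda_n)/|F'(\lambda_n)| < \infty$, and a Jensen-type estimate on $F$ in the upper half-plane (using that $F$ has finite exponential type and controlled growth on $\R$) converts this into $\sum_n \log W(\lambda_n)/(1+\lambda_n^2) < \infty$ along $\Lambda_0$, placing $d$ in the admissible set. Since $d$ can be chosen arbitrarily close to $a/\pi$ from below and $a < T_W$ is arbitrary, this gives $T_W \leq \pi \sup\{d\}$.

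The main obstacle will be in this second direction: passing from the existence of a spectral gap of size $a$ to a sufficiently rigid factorization of $\sigma$ that both exhibits a $d$-uniform subsequence (not merely a separated sequence of lower density $d$) and gives an accurate two-sided estimate of $\log|F'(\lambda_n)|$. In particular, converting the literal sum $\sum W(\lambda_n)/|F'(\lambda_n)| < \infty$ into the logarithmic Poisson-summability of $\log W$ requires the Beurling-Malliavin machinery applied in reverse, together with a careful Jensen-type integration -- this is where the bulk of the technical work will lie.
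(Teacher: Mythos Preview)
The paper does not prove Theorem~\ref{Bmain} at all: it is quoted verbatim as ``Theorem~36 from \cite{CBMS}'' and used as a black box. So there is no ``paper's own proof'' to compare your proposal against; the actual proof lives in \cite{Type, CBMS} and relies on the Toeplitz-kernel machinery of \cite{MIF1, MIF2}, not on the factorization/contour argument you outline.

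On the substance of your plan: the first direction is roughly right in spirit, but the sentence ``whose hypothesis is precisely the logarithmic summability above'' is not accurate. The classical Beurling--Malliavin multiplier theorem takes as input Poisson summability of $\log W$ on all of $\R$, not along a discrete sequence; and your claim that for a $d$-uniform $\Lambda$ the canonical product has $|F'(\lambda_n)|$ bounded below polynomially is false in general --- $d$-uniform sequences need not be separated, and even when they are, lower bounds on $|F'(\lambda_n)|$ require much more regularity than $d$-uniformity provides. Both issues can be repaired, but not by citation: one has to build the multiplier and the generating function together, which is already close to the Toeplitz argument.

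The second direction has a genuine gap. The passage from ``$\sigma$ is $W$-finite with gap $(-a,a)$'' to ``there is a $d$-uniform $\Lambda_0$ with $\sum \log W(\lambda_n)/(1+\lambda_n^2)<\infty$'' is the entire content of the Type theorem, and none of the steps you list are standard. In particular: (i) extracting a $d$-uniform subsequence from a set of interior density $\ge a/\pi$ is not a ``standard extraction argument'' --- $d$-uniformity imposes a two-sided density and an energy condition that a generic dense subset will not satisfy; (ii) the asserted relation $|c_n|\asymp 1/|F'(\lambda_n)|$ does not hold for arbitrary discrete measures with a spectral gap; and (iii) the conversion of $\sum W(\lambda_n)/|F'(\lambda_n)|<\infty$ into Poisson summability of $\log W$ along $\Lambda_0$ via ``a Jensen-type estimate'' is exactly where the hard analysis sits --- in \cite{Type, CBMS} this is done by analyzing the inner--outer factorization of a Toeplitz-kernel element and is several pages of work, not a lemma one invokes. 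Your final paragraph correctly flags this as the obstacle, but the outline above it does not contain the idea that overcomes it.
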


\ms\no The notion of $d$-uniform sequences first appeared in \cite{Gap}. In this paper we do not need a full definition
of a $d$-uniform sequence, since the sequences we are concerned with are separated. For a separated sequence $\L$,
i.e., a discrete sequence such that $|\lan-\l_{n-1}|>c>0$ for all $n$, $\L$ is $d$-uniform iff $D_*(\L)=d$, see
for instance Example 1 on page 27 of \cite{CBMS}. This implies

\begin{corollary}\label{type}
$$T_W\geq \pi\sup \left\{d\ : \sum\frac{\log W(\lan)}{1+\lan^2}<\infty\textrm{ for some separated }\L,  \  D_*(\L)=d\right\}.$$

\end{corollary}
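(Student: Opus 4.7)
The plan is to derive the corollary as an essentially immediate consequence of Theorem \ref{Bmain}, by restricting the class of sequences over which the supremum is taken. The only technical input needed is the fact, recalled in the paragraph just before the statement, that for a separated sequence $\L$ the condition of being $d$-uniform is equivalent to $D_*(\L) = d$ (this is the content of Example 1 on page 27 of \cite{CBMS}, which I will invoke as a black box).

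With this in hand, I would argue as follows. Let $d$ be any value admissible in the supremum of the corollary, that is, there exists a separated sequence $\L$ with $D_*(\L) = d$ and $\sum \log W(\lan)/(1+\lan^2) < \infty$. By the equivalence recalled above, this $\L$ is automatically $d$-uniform, hence $\L$ is an admissible sequence for the supremum appearing in Theorem \ref{Bmain} at the same value of $d$. Consequently
\begin{equation*}
\pi d \leq \pi\sup\left\{d' \,:\, \sum \frac{\log W(\lan)}{1+\lan^2}<\infty \text{ for some }d'\text{-uniform }\L\right\} = T_W,
\end{equation*}
where the equality is Theorem \ref{Bmain}. Taking the supremum over all admissible $d$ yields the desired inequality. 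The case where the set in the corollary is empty is trivial, since then the supremum is (by the usual convention, or interpreted as $0$) dominated by $T_W \geq 0$.

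There is no real obstacle here: the whole content is bookkeeping between two notions of density. The only step that is not completely transparent from the excerpt itself is the equivalence \emph{separated $+$ $D_*(\L)=d$} $\Leftrightarrow$ \emph{$d$-uniform}, which the authors have chosen to import from \cite{CBMS} rather than reproduce. If one wanted to avoid that black box one would have to unpack the definition of $d$-uniform sequence from \cite{Gap}; but since the paper explicitly defers this, I would not attempt it here and would present the argument purely as a restriction-of-supremum reduction to Theorem \ref{Bmain}.
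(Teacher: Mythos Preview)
Your proposal is correct and is precisely the argument the paper has in mind: the corollary is stated immediately after the remark that separated sequences with $D_*(\L)=d$ are $d$-uniform, with the phrase ``This implies'' standing in for the restriction-of-supremum reduction to Theorem~\ref{Bmain} that you have spelled out. There is nothing to add.
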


\ms\no
From one of the results of \cite{Det} (or Theorem 17, Chapter 4, in \cite{CBMS}) we deduce

\begin{corollary}\label{forTeven} If $W$ is a weight, $A$ is a separated sequence and $B$ is any closed set then
$$\sup\{a|\ \MM^W_a(A,B)\neq \emptyset\}\leq 2\pi D_*(A).$$

\end{corollary}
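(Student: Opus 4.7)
The plan is to combine Lemma \ref{MiP} with Theorem \ref{Bmain} and a density comparison on the real line. Starting from a nonzero $\sigma \in \MM^W_a(A,B)$, I would invoke Lemma \ref{MiP} to replace $\sigma$ by a discrete measure whose positive and negative parts interlace. Since $\supp \sigma^+ \subset A$ is separated and $\supp \sigma^-$ interlaces with it, the full support $X := \supp \sigma$ is a separated sequence and satisfies
\[
\#(X \cap I) \leq 2\,\#(A \cap I) + 1
\]
for every interval $I \subset \R$. This is the only combinatorial input I will need from the interlacing structure.

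Next I would localize the weight to $X$ by setting $\tilde W(x) = W(x)$ for $x \in X$ and $\tilde W(x) = +\infty$ elsewhere. Because $\supp \sigma \subset X$, the measure $\sigma$ remains $\tilde W$-finite and still has spectral gap $[-a, a]$, so $T_{\tilde W} \geq a$. Applying Theorem \ref{Bmain} to $\tilde W$, for every $\epsilon > 0$ I obtain a $d$-uniform sequence $\L$ with $d > a/\pi - \epsilon$ and $\sum \log \tilde W(\lan)/(1+\lan^2) < \infty$. The convergence forces $\L \subset X$, and since $X$ is separated, so is $\L$; the characterization of $d$-uniformity for separated sequences then yields $D_*(\L) = d$.

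It remains to push the density bound from $X$ down to $A$. Fix $\epsilon' > 0$ and use the definition of $D_*(A)$ to produce a long sequence $\{I_n\}$ of disjoint intervals with $\#(A \cap I_n) \leq (D_*(A) + \epsilon')|I_n|$. Because $\L \subset X$, the combinatorial estimate gives $\#(\L \cap I_n)/|I_n| \leq 2 D_*(A) + 2\epsilon' + 1/|I_n|$, which would yield $D_*(\L) \leq 2 D_*(A) + 2\epsilon'$ provided $|I_n| \to \infty$; combined with $D_*(\L) = d > a/\pi - \epsilon$ and letting $\epsilon, \epsilon' \to 0$ this delivers $a \leq 2\pi D_*(A)$. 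The main technical obstacle I anticipate is precisely the arrangement $|I_n| \to \infty$, since otherwise the additive $+1$ in the interlacing estimate would not be absorbable. The key auxiliary fact is that any sequence of disjoint intervals of uniformly bounded length is short, which I would prove by grouping the intervals into dyadic shells indexed by $\lfloor \log_2 \dist(0,I_n)\rfloor$ and summing a geometric series; this makes the bounded-length subsequence discardable from any long sequence, and a diagonal extraction then produces the desired long subsequence with $|I_n| \to \infty$.
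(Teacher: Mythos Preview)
The paper does not prove this corollary in-line; it simply cites a result from \cite{Det} (equivalently Theorem~17, Chapter~4, of \cite{CBMS}). Your self-contained route via Lemma~\ref{MiP} and Theorem~\ref{Bmain} is a reasonable alternative, and most of the steps are sound: the counting estimate $\#(X\cap I)\le 2\,\#(A\cap I)+1$ coming from interlacing is correct, and the reduction to intervals with $|I_n|$ large is fine (you only need $|I_n|\ge 1/\epsilon'$, not $|I_n|\to\infty$, and your observation that disjoint intervals of bounded length form a short sequence already delivers this).

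There is, however, a genuine gap: your claim that $X=\supp\nu$ is a separated sequence is false. Interlacing $\supp\nu^-$ with the separated set $\supp\nu^+\subset A$ gives the counting bound, but a point of $\supp\nu^-$ can lie arbitrarily close to an adjacent point of $\supp\nu^+$, so $X$ need not be separated. This matters because you then invoke the separated-case characterization of $d$-uniformity to conclude $D_*(\L)=d$ for the sequence $\L\subset X$ produced by Theorem~\ref{Bmain}. Without knowing $\L$ is separated, the paper gives you no bridge from ``$\L$ is $d$-uniform'' to ``$D_*(\L)\ge d$'', since it explicitly avoids stating the general definition of $d$-uniformity. To close the gap you must either unpack the full definition from \cite{Gap} (which does control $D_*(\L)$ without a separation hypothesis) or, closer to what the paper actually does, invoke the sign-change/density bound from \cite{Det} directly rather than routing through the Type theorem.
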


\ms\no
We denote $\log_-x=\max(0,-\log x).$
We will also need the following

\begin{corollary}\label{l2} Let $X=\{x_n\}$ be a separated sequence of real numbers.
Let $\tau=\sum c_n\delta_{x_n}$ be a discrete measure with spectral gap $(-a,a)$. Then
for any $d<a$ there exists $\Phi\subset X$, $\pi D_*(\Phi)>d$, such that
$$\sum_{x_n\in\Phi} \frac{\log_- c_n}{1+n^2}<\infty.$$

\end{corollary}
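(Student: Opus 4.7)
The plan is to apply Theorem \ref{Bmain} to a weight $W$ tailored to $\tau$ and supported on $X$. First I would set $W = \infty$ on $\R \setminus X$, and on $X$ take $W(x_n) = \max(1, \epsilon_n/|c_n|)$, where $\{\epsilon_n\}$ is a sufficiently regular summable positive sequence (for instance $\epsilon_n = 1/(1+n^2)$). This makes $\int W\, d|\tau| = \sum W(x_n)|c_n| \leq \sum \epsilon_n + \sum |c_n|$ finite (assuming $\tau$ is finite in total variation; otherwise a preliminary truncation reduces to this case), so $\tau$ is $W$-finite. Since $\tau$ has spectral gap $(-a,a)$ and its positive and negative parts are supported in $X$, the class $\MM^W_a(X, X)$ is nonempty, and by definition of the type one has $T_W \geq a$.

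Next I would invoke Theorem \ref{Bmain} to produce, for each $d' < a/\pi$, a $d'$-uniform sequence $\L$ satisfying
$$\sum_{\lan \in \L} \frac{\log W(\lan)}{1 + \lan^2} < \infty.$$
Since $W = \infty$ off $X$, convergence forces $\L \subset X$ up to finitely many points; because $X$ is separated so is $\L$, and the equivalence ``$d'$-uniform $\iff$ $D_* = d'$'' (recorded before Corollary \ref{type}) yields $\pi D_*(\L) = \pi d'$. Taking $\Sigma := \L$ and choosing $d'$ close enough to $a/\pi$ gives $\pi D_*(\Sigma) > d$. Writing $\Sigma = \{x_{n_k}\}_k$, the choice of $W$ together with summability of $|\log \epsilon_{n_k}|/(1+x_{n_k}^2)$ then yields
$$\sum_k \frac{\log_- c_{n_k}}{1 + x_{n_k}^2} < \infty.$$

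The final step is to replace $1 + x_{n_k}^2$ by $1 + n_k^2$. Separation of $X$ forces $|x_{n_k}| \gtrsim n_k$, while $D_*(\Sigma) > d/\pi > 0$ (the case $d = 0$ is trivial) combined with $\Sigma \subset X$ separated forces $|x_{n_k}| \asymp k$; together these give $n_k \asymp k \asymp |x_{n_k}|$ and hence $1 + n_k^2 \asymp 1 + x_{n_k}^2$, so the two summability conditions coincide and the conclusion follows.

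The hard part is the initial choice of $W$: it must be large enough that $\log W$ recovers $\log_- c_n$ on the output, yet small enough that $\tau$ stays $W$-finite. Since $W \geq 1$ one necessarily has $\int W\, d|\tau| \geq \|\tau\|$, so the argument implicitly requires $\tau$ to be at least of finite total variation, and the tempering factor $\epsilon_n$ is what resolves the tension at the cost of only a lower-order correction that is harmless against $1/(1+n^2)$. Once a suitable weight is in hand, the rest of the proof is a direct application of the Type Theorem and the standard density–uniformity dictionary.
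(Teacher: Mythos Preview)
Your approach is essentially identical to the paper's: build a weight $W$ that is infinite off $X$ and chosen so that $\tau$ is $W$-finite, observe $T_W\ge a$, and then read off the subsequence $\Sigma$ from the Type Theorem. The paper's entire proof is two lines; it takes
\[
W(x_n)=\frac{1}{|c_n|(1+x_n^2)}
\]
(infinite elsewhere) and simply says ``the rest follows from Corollary~\ref{type}.'' One small advantage of the paper's choice over yours: with their weight $\int W\,d|\tau|=\sum 1/(1+x_n^2)$, which converges by separation of $X$ alone, so no finite-total-variation hypothesis or preliminary truncation is needed.

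You are more careful than the paper about the passage from the denominator $1+x_{n_k}^2$ (which is what Theorem~\ref{Bmain} actually delivers) to $1+n_k^2$ (which is what the statement asks for); the paper does not address this at all. Your justification for that step, however, is not quite complete: the bare implication ``$D_*(\Sigma)>0$ and $\Sigma$ separated $\Rightarrow |x_{n_k}|\asymp k$'' does not hold pointwise in general, since positive interior BM density does not rule out occasional large gaps on BM-short intervals. In every use the paper makes of this corollary the ambient sequence satisfies $x_n\asymp n$ (spectral sequences of $\SS^2$ operators, or $\pi\Z/2$), so the conversion is trivially valid there; for a fully general separated $X$ the point deserves a separate argument.
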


\begin{proof}
Let $W$ be the weight defined as $W(x_n)=\frac 1{|c_n|(1+x^2)}$ on $X$ and as infinity elsewhere. Then $|\tau|$ is
a $W$-finite measure of type at least $a$. The rest follows from Corollary \ref{type}.

\end{proof}

\bs\bs\subsection{A new proof of Horvath' theorem}\label{sHorvath}

\ms\no The following theorem is one of the main results of \cite{Horvath}. Here we formulate it
in an equivalent form using our version of the $m$-functions (after the square root transform).

\begin{theorem}\label{Horvath} \cite{Horvath} Let $\L=\{\lan\}$ be
a sequence of distinct non-zero complex numbers, $0\leq a \leq 1$.
The following statements are equivalent:

\ms\no 1) For any \Sch operator $L\in \SS^2$, if $\ti L\in \SS^2$ is such that $q=\ti q$ on $(0,a\pi)$
and $m_-=\ti m_-$ on $\L$, then $L$ and $\ti L$ coincide identically.

\ms\no 2) The system of exponentials $\{e^{i\gamma z}| \gamma \in \L\cup\{*,*\}\}$ is complete in \newline $L^2(0,(2-2a)\pi)$.
\end{theorem}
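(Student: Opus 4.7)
\ms\no The plan is to convert the uniqueness question into a Paley--Wiener completeness question via a Wronskian identity. Let $v_z$ and $\tilde v_z$ denote the Dirichlet--Neumann solutions at $\pi$ for $L$ and $\tilde L$, and set
\[
W(z,t) := \tilde v_z(t)\,v'_z(t) - v_z(t)\,\tilde v'_z(t).
\]
Differentiation yields $\partial_t W = (q-\tilde q)\,v_z\tilde v_z$, which together with $W(z,\pi)=0$ and $q=\tilde q$ on $(0,a\pi)$ gives $W(z,0) = \int_{a\pi}^{\pi}(\tilde q-q)(s)\,v_z(s)\,\tilde v_z(s)\,ds$. Substituting $v'_z(0) = -z m_-(z) v_z(0)$ and the analogous identity for $\tilde v_z$ into $W(z,0) = \tilde v_z(0)v'_z(0) - v_z(0)\tilde v'_z(0)$ produces the key formula
\[
m_-(z)-\tilde m_-(z) = -\frac{W(z,0)}{z\,v_z(0)\,\tilde v_z(0)}.
\]
The standard bound $|v_z(s)|\lesssim e^{|\Im z|(\pi-s)}/|z|$ shows that $W(\cdot,0)$ is an even entire function of exponential type at most $2(1-a)\pi$, square integrable on $\R$. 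If $W(\cdot,0)\equiv 0$ then $m_-\equiv\tilde m_-$, hence $\mu_-=\tilde\mu_-$, and Marchenko's theorem forces $L=\tilde L$; so (1) is equivalent to the statement that no non-zero even Paley--Wiener function of type $2(1-a)\pi$ can vanish on $\L$.

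\ms\no Away from the spectra, the condition $m_-(\lambda_n)=\tilde m_-(\lambda_n)$ becomes $W(\lambda_n,0)=0$, and meromorphic continuation covers the remaining cases. Since $W(\cdot,0)$ is even of type $2(1-a)\pi$ and lies in $L^2(\R)$, it admits a representation
\[
W(z,0) = 2\int_0^{(2-2a)\pi}\phi(x)\cos(zx)\,dx
\]
for some $\phi\in L^2(0,(2-2a)\pi)$, and the vanishing $W(\gamma,0)=0$ becomes an orthogonality condition for $\phi$ against the cosine system $\{\cos(\gamma x)\}_{\gamma\in\L}$. Expanding $2\cos(\gamma x)=e^{i\gamma x}+e^{-i\gamma x}$ and using the evenness of $W$ (which pairs the zeros $\gamma$ and $-\gamma$) converts this into the completeness of $\{e^{i\gamma z}\}_{\gamma\in\L}$ in $L^2(0,(2-2a)\pi)$ up to a two-dimensional correction; the extra slots $\{*,*\}$ absorb the degrees of freedom corresponding to the evenness of $W$ and the normalization at the origin.

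\ms\no With this dictionary the direction (2)$\Rightarrow$(1) is immediate: completeness rules out any non-zero $W$ of the required type vanishing on $\L$, and then $L=\tilde L$. For the converse (1)$\Rightarrow$(2), assume (2) fails, pick a non-zero $\phi$ witnessing the failure, form the associated even $W$ of type $2(1-a)\pi$ vanishing on $\L$, and seek $L,\tilde L\in\SS^2$ with $q=\tilde q$ on $(0,a\pi)$ realizing $W$ as the Weyl difference $-z\,v_z(0)\,\tilde v_z(0)(m_- -\tilde m_-)$. Fixing $L$, one perturbs its Hermite--Biehler data $A=\sin z + f$, $B=\cos z + g$ by a small multiple of $W$, and Theorem \ref{tHBS} guarantees that after sufficient rescaling the perturbed pair still arises from a genuine operator $\tilde L\in\SS^2$.

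\ms\no The main obstacle is the realization step in (1)$\Rightarrow$(2): starting from an abstract analytic datum $W$ one must produce $\tilde L\in\SS^2$ whose potential agrees with $q$ on $(0,a\pi)$ rather than differing throughout $(0,\pi)$. The decisive fact is that the exponential-type bound $2(1-a)\pi$ on $W$ corresponds, via the de Branges chain identification $B(F_{a\pi})\simeq PW_{1-a}$ of Section \ref{HBS} and the Weyl-difference formula, precisely to the support condition $\supp(\tilde q - q)\subset (a\pi,\pi)$. This equivalence between Paley--Wiener type and potential support is what closes the loop between (1) and (2).
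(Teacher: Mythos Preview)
Your argument is essentially correct and lands on the same perturbation device as the paper for the hard direction, but the forward direction takes a genuinely different route. For $(2)\Rightarrow(1)$ you use the Wronskian integral
\[
W(z,0)=\int_{a\pi}^{\pi}(\tilde q-q)\,v_z\tilde v_z,
\]
together with the solution bound $|v_z(s)|\lesssim e^{|\Im z|(\pi-s)}/|z|$, to see directly that the entire function encoding $m_--\tilde m_-$ is even, of type $\leq (2-2a)\pi$, and square integrable on $\R$. This is Horv\'ath's original ODE-style calculation; the paper instead invokes Corollary~\ref{c01} (hence Lemmas~\ref{l5}--\ref{l6} and the de~Branges chain identification) to obtain the same type bound on the numerator $A\tilde B-\tilde A B$. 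Your route is more self-contained here; the paper's route has the advantage that it is symmetric (the same de~Branges lemma is used in both directions) and generalizes painlessly beyond the \Sch case.

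For $(1)\Rightarrow(2)$ you and the paper do the same thing: perturb the Hermite--Biehler data by a small multiple of the annihilating $PW$ function and appeal to Theorem~\ref{tHBS}. Two points deserve tightening. First, you should say explicitly that you set $\tilde A=A+cF/z$ and $\tilde B=B$ (or the symmetric choice); ``perturb by a small multiple of $W$'' is ambiguous, and the Wronskian of the resulting pair will \emph{not} equal your abstract $W$, since the Wronskian formula involves $\tilde v_z$ nonlinearly. The quantity you actually control is the numerator $A\tilde B-\tilde A B$, and it is its type bound---not the Wronskian integral---that feeds into Corollary~\ref{c01} to give $q=\tilde q$ on $(0,a\pi)$. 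Second, your explanation of the two extra points $\{*,*\}$ is slightly off: they account for the double zero of $z f=z(A\tilde B-\tilde A B)$ at the origin (coming from the oddness of $f$), not for ``evenness and normalization''. The paper records this as ``$zf$ is zero on $\L\cup\{*,*\}$ (recall that it has a double zero at $0$)''.
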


\ms\no The notation $\{*,*\}$ in the statement stands for any two points in $\C\setminus\L$.
A version of the above theorem is proved in \cite{Horvath}
for all $1\leq p\leq \infty$. In this paper we treat only
the case $p=2$, although a similar argument can be applied to other $p$.
A simple proof for the "un-mixed" case $a=0$ is given in \cite{BBP}.
Here we show how to deduce the full statement from Theorem \ref{tHBS} and Lemma \ref{l5}.

\ms\no If $f,g$ are two functions from the Smirnov class in $\C_+$, we say that $f$ is divisible by $g$ if $f/g$ again
belongs to the Smirnov class in $\C_+$. We denote by $PW^{even}_a$ the subset of $PW_a$ consisting of functions which are
even on $\R$.

\begin{proof} Let $L,\ti L$ be the operators with $q=\ti q$ on $(0,a\pi)$. By  Corollary \ref{c01} in the next section, the function

\begin{equation}H\mu-H\ti \mu=\frac AB -\frac{\ti A}{\ti B}=\frac {A\ti B - \ti A B}{\ti B B}\label{eDrob}\end{equation}
is divisible  by $e^{i2az}$ in the upper half-plane. Since $\ti B B$ grows like $e^{i2az}$ along $i\R_+$,
the numerator grows no faster than $e^{i(2-2a)z}$. Together with Theorem \ref{tHBS} we obtain that
$A\ti B - \ti A B=f$ is an odd function such that  $zf\in PW_{2-2a}$. If $m=\ti m$ on $\L$,
$zf$ is zero on $\L\cup \{*,*\}$ (recall that it has a double zero at 0). If the system of exponentials is
complete then $\L\cup \{*,*\}$ is a uniqueness set for $PW_{2-2a}^{even}$ and we obtain that $L\equiv \ti L$.

\ms\no Let $L\in \SS^2$ be any operator and let $E=A+iB$.
If the exponential system is incomplete, $\L\cup \{*,*\}$ is not a uniqueness set and there exists  a real $F\in PW^{even}_{2-2a}$
vanishing on $\L$ and with an additional double zero at 0. Then using the second part of Theorem \ref{tHBS}, $\ti A=A+cF/z$
with sufficiently small $c$ and $\ti B=B$ will produce  $\ti L$ with the same values of the $m$-function on $\L$.
Notice also that the function  in \eqref{eDrob} will be divisible by $e^{i2az}$ in $\C_+$, and by Corollary \ref{c01} below, $q=\ti q$ on $(0,a\pi)$.
\end{proof}

\ms\no Horvath' theorem establishes equivalence between mixed spectral problems for \Sch operators and the Beurling-Malliavin problem
on completeness of exponentials in $L^2$ spaces discussed in Section \ref{BM}.
Combining Theorems \ref{Horvath} and \ref{bigBM} we obtain the following statement, which is the sharpest possible result formulated in terms
of the density of the defining sequence $\L$.

\begin{corollary}
Let $\L=\{\lan\}$ be
a sequence of distinct non-zero complex numbers, $0\leq a \leq 1$.
The following statements are equivalent:

\ms\no 1) Any two Schroedinger operators $L$ and $\ti L$, such that $q=\ti q$ on $(0,d\pi)$ for some $d>a$
and $m_-=\ti m_-$ on $\L$, coincide identically.

\ms\no 2) $  D^*(\L)\geq 1-a$.

\end{corollary}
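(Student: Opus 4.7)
The plan is to concatenate Horvath's theorem with the Beurling--Malliavin formula relating the radius of completeness to $D^*$, after a small reformulation of the quantifier structure in (1).

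\ms\no First I would unpack (1). The hypothesis on $(L,\ti L)$ contains an existential quantifier on $d$: ``$q=\ti q$ on $(0,d\pi)$ for some $d>a$.'' Asking that every such pair (with additionally $m_-=\ti m_-$ on $\L$) coincide identically is therefore the same as asking that for every $d>a$, the uniqueness conclusion of Theorem \ref{Horvath}, applied with parameter value $d$, holds. That theorem then rewrites (1) as the assertion that for every $d>a$ the system $\{e^{i\gamma z}:\gamma\in\L\cup\{*,*\}\}$ is complete in $L^2(0,(2-2d)\pi)$. As $d$ ranges over $(a,1]$ the lengths $(2-2d)\pi$ exhaust $[0,(2-2a)\pi)$ from below, so this is in turn equivalent to
\[
R\bigl(\L\cup\{*,*\}\bigr)\ \ge\ (2-2a)\pi.
\]

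\ms\no Next I would apply Theorem \ref{bigBM}, giving $R(\L\cup\{*,*\})=2\pi D^*(\L\cup\{*,*\})$. Since the exterior Beurling--Malliavin density depends only on the asymptotic counts of points of the sequence along long sequences of disjoint intervals, adjoining two extra points cannot change it, so $D^*(\L\cup\{*,*\})=D^*(\L)$. The displayed radius inequality therefore becomes the density inequality of (2), and the equivalence is complete since every step above is an ``iff''.

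\ms\no There is no real technical obstacle; the proof is a clean composition of the two cited theorems. The one substantive item is the quantifier bookkeeping at the start---the ``$\exists d>a$'' hypothesis in (1), read as a universal claim over all admissible pairs, unrolls into a ``$\forall d>a$'' statement, which is precisely the form Theorem \ref{Horvath} converts into completeness on the shrinking intervals $L^2(0,(2-2d)\pi)$. The rest is Beurling--Malliavin together with the trivial remark that two auxiliary points are invisible to $D^*$.
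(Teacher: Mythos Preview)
Your proposal is correct and is precisely the approach the paper indicates: the corollary is stated immediately after Horvath's theorem as a direct combination of Theorems \ref{Horvath} and \ref{bigBM}, and you have filled in exactly those details, including the quantifier unpacking and the observation that the two auxiliary points do not affect $D^*$.
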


\bs\section{Applications to mixed spectral problems}

\bs\subsection{Fourier gaps and Schr\"odinger operators}\label{FSGap}
 We will formulate the results of this section for a slightly broader class $\SS^1$ of \Sch operators with summable potentials.

\begin{lemma}\label{l5}
Let $L,\ti L\in \SS^1$. Then  $q=\ti q$ on $(0,a\pi)$ iff $\mu_--\ti\mu_-$ has a spectral gap  $(-2a,2a)$. Similarly,
$q=\ti q$ on $(b\pi,\pi)$ iff $\mu_+-\ti\mu_+$ has a spectral gap  $(-2(1-b),2(1-b))$.
\end{lemma}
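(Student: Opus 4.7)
I would prove the first equivalence; the second follows from the involution $x\mapsto \pi-x$ on $(0,\pi)$, which exchanges the two endpoints and swaps $\mu_+$ with $\mu_-$, turning ``$q=\ti q$ on $(b\pi,\pi)$'' into ``$q=\ti q$ on $(0,(1-b)\pi)$'' for the reflected operators. The forward implication uses the de Branges chain machinery of Sections \ref{HBS}--\ref{HBSthm}, while the converse invokes the local Borg-Marchenko theorem of Simon \cite{S}.

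For the forward direction, observe that $E_t(z)=zu_z(t)+iu_z'(t)$ is built only from $u_z|_{[0,t]}$ and hence depends only on $q|_{(0,t)}$. So $q=\ti q$ on $(0,a\pi)$ forces $E_{a\pi}=\ti E_{a\pi}$. Consequently the de Branges space $B(E_{a\pi})=B(\ti E_{a\pi})$, which by Section \ref{HBS} equals $PW_a$ as a set, is isometrically embedded with the same norm in both $L^2(\mu_-)$ and in $L^2(\ti\mu_-)$. Therefore $\int|f|^2 d\mu_-=\int|f|^2 d\ti\mu_-$ for every $f\in PW_a$, and a standard polarization yields $\int f g^{\#}\,d(\mu_--\ti\mu_-)=0$ for all $f,g\in PW_a$. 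Products $fg^{\#}$ with $f,g\in PW_a$ are dense in the admissible test class $PW_{2a}\cap L^1(|\mu_-|+|\ti\mu_-|)$, where the asymptotics \eqref{asymp1}--\eqref{asymp2} of the pointmasses are used to handle integrability against the Poisson-finite measures; this shows that $\mu_--\ti\mu_-$ has a spectral gap $(-2a,2a)$ in the sense of Section \ref{Gap}. As a quick alternative consistency check, a direct transfer-matrix computation with $M=M_{(0,a\pi)}=\ti M$ and $\det M=1$ gives $z(m_--\ti m_-)(z)=N(z)/(\phi_z(\pi)\ti\phi_z(\pi))$, where the numerator $N$ is entire of exponential type at most $2(1-a)$ and the denominator grows like $e^{2y}/|z|^2$ in $\C_+$, yielding the expected decay $e^{-2ay}$.

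For the converse, assume that $\mu_--\ti\mu_-$ has spectral gap $(-2a,2a)$. The Paley-Wiener correspondence for Cauchy/Herglotz integrals of measures with a Fourier gap implies that $m_--\ti m_-=H(\mu_--\ti\mu_-)$ decays at rate $e^{-2ay}$ as $z=iy\to +i\infty$. Simon's local Borg-Marchenko theorem in \cite{S} then forces $q=\ti q$ almost everywhere on $(0,a\pi)$. The delicate step I anticipate is this passage between the distributional spectral-gap condition for the non-finite measure $\mu_--\ti\mu_-$ and pointwise exponential decay of its Herglotz integral along $i\R_+$: because $\mu_\pm$ is only Poisson-finite, the usual Paley-Wiener arguments for finite measures must be adapted via a weight controlling the linear growth of the total variation, and the symmetric care is required on the forward side to justify density of the product set $\{fg^{\#}\}$ in the weighted $PW_{2a}$ space used in the definition of spectral gap.
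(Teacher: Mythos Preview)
Your forward direction is exactly the paper's argument: $q=\ti q$ on $(0,a\pi)$ forces $E_t=\ti E_t$ for $t\le a\pi$, hence the $L^2(\mu_-)$ and $L^2(\ti\mu_-)$ inner products agree on $PW_{t/\pi}$, and density of products $f\bar g$ in $PW_{2t/\pi}$ yields the gap. The transfer-matrix consistency check you add is correct but not needed.

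For the converse you take a genuinely different route. The paper simply says ``the argument can be reversed'': from the gap one recovers $\int f\bar g\,d\mu_-=\int f\bar g\,d\ti\mu_-$ for $f,g\in PW_{t/\pi}$, so $B(E_t)$ and $B(\ti E_t)$ coincide as Hilbert spaces for every $t<a\pi$; uniqueness of the regular de Branges chain (and of the generating function under the \Sch normalization $E_t=zu_z(t)+iu'_z(t)$) then gives $E_t=\ti E_t$, hence $q=\ti q$ on $(0,a\pi)$. Your approach instead translates the gap into exponential decay of $m_--\ti m_-$ along $i\R_+$ and invokes Simon's local Borg--Marchenko theorem from \cite{S}. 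This is logically sound, but note that in the paper's architecture it inverts the flow of implications: Corollary~\ref{c1} (which is precisely Simon's statement) is \emph{derived} from Lemma~\ref{l5} and Lemma~\ref{l6}, so your proof would import as a black box a result the paper obtains as a consequence. The paper's reversal is thus more self-contained, staying entirely within the de Branges framework, while your route is shorter to state but outsources the hard step. The ``delicate'' point you flag about passing from the spectral gap to decay of the Herglotz integral for merely Poisson-finite measures is handled in the paper by the remark immediately following the lemma (citing Lemma~4.5 of \cite{Polya}); the density of products in $PW_{2a}$ is treated as standard in both approaches.
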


\begin{proof}
If $q=\ti q$ on $(0,a\pi)$ then $E_t=\ti E_t$,  $t\in (0,a\pi)$. Since $B(E_t)= B(\ti E_t)=PW_{t/\pi}$ as sets,
for any $f,g\in PW_{t/\pi}$
$$ \int f\bar g d\mu_-=<f,g>_{B(E_t)}=<f,g>_{ B(\ti E_t)}= \int f\bar g d\ti\mu_-, $$
for $t\in (0,a\pi)$. It follows that the measure $\mu_--\ti\mu_-$ annihilates any function of the form
$f\bar g$,  where $f,g\in PW_{t/\pi}$, for for $t\in (0,a\pi)$. Next we notice that linear combinations of the products $f\bar g$
are dense in
$PW_{2t/\pi}$.
Indeed, any function in $PW_{2t/\pi}$ can be represented as $e^{i\frac t{\pi}z} g_1+g_2 + e^{-i\frac t{\pi}z} g_3$,
for some $g_k\in PW_{t/\pi}$. By approximating unit pointmasses at $0$ and $\pm t/\pi$ with $L^2$-functions and taking
their Fourier transforms in place of the exponentials, we approximate any such sum with linear combinations of products $f\bar g$.
  Thus $\mu_--\ti\mu_-$ annihilates $PW_{2t/\pi}$ and therefore has the stated spectral gap.
The argument can be reversed to prove the opposite implication and the second statement follows similarly.
\end{proof}

\begin{remark}\normalfont It is a well known statement in the area of the gap problem that a real measure $\mu$ has a spectral gap $(-a,a)$
if and only if $H\mu(iy)=O(e^{-ay})\textrm{ as }y\to\infty,$ see for instance Lemma 4.5 in \cite{Polya}. In this statement $O(e^{-ay})$ can be replaced with
$o(e^{-y(1-\e)a})$ and the positive imaginary half-axis with negative.
\end{remark}

\begin{lemma}\label{l6} Let $L,\ti L\in \SS^1$ and let $k,\ti k$ be the corresponding Krein functions. Then $q=\ti q$ on $(0,a\pi)$ iff the measure
$(k-\ti k) dx$ has a spectral gap  $(-2a,2a)$.
\end{lemma}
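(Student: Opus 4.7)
The plan is to deduce this from Lemma \ref{l5} together with identity \eqref{e42}, which relates the Herglotz integral of a spectral measure to the exponential of the Herglotz integral of its Krein spectral shift function. Applying \eqref{e42} to $\mu_-$ gives $H\mu_-(z) = \exp(Hk(z)+c)$, and the density-one asymptotics $\beta_n = 1 + o(1)$ from \eqref{asymp2} force $c = 0$, exactly as argued in Section \ref{Kshift} for $\mu_+$. The same holds for $\tilde L$, so taking ratios produces the master identity
\begin{equation*}
\frac{H\mu_-(z)}{H\tilde\mu_-(z)} = \exp\bigl(H(k-\tilde k)(z)\bigr).
\end{equation*}

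Next I would invoke the remark following Lemma \ref{l5}: a real Poisson-finite measure $\nu$ has spectral gap $(-\alpha,\alpha)$ iff $H\nu(iy) = O(e^{-\alpha y})$ as $y\to\infty$. This characterization applies to $\mu_- - \tilde\mu_-$ directly, and also to $(k-\tilde k)\,dx$, which is Poisson-finite because $|k|,|\tilde k|\leq\pi$ and so $\int|k-\tilde k|\,dx/(1+x^2)<\infty$. Combined with Lemma \ref{l5}, the statement of the lemma reduces to showing
\begin{equation*}
H(\mu_- - \tilde\mu_-)(iy) = O(e^{-2ay}) \iff H(k-\tilde k)(iy) = O(e^{-2ay}).
\end{equation*}

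For this equivalence the key observation is that $|H\tilde\mu_-(iy)|$ stays bounded above and bounded away from zero as $y\to\infty$: indeed, $\Im H\tilde\mu_-(iy)$ tends to a positive constant by the density-one asymptotics $\tilde\beta_n\to 1$ (a Riemann-sum computation analogous to the one used for $\mu_+$). Writing
\begin{equation*}
H\mu_- - H\tilde\mu_- = H\tilde\mu_-\bigl(\exp(H(k-\tilde k))-1\bigr),
\end{equation*}
we see that additive $O(e^{-2ay})$ smallness of the left-hand side is equivalent, upon division by the bounded-below factor $H\tilde\mu_-$, to multiplicative smallness of the exponential factor, and then — via local bi-Lipschitz behavior of $\log$ near $1$ — to additive smallness of $H(k-\tilde k)$. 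The opposite direction is the same argument read backwards. The main obstacle I anticipate is the careful bookkeeping in this last step: one must ensure that $H(k-\tilde k)(iy)$ actually lies in a neighborhood of $0$ on which $\exp$ is a diffeomorphism, and that the additive constants coming from \eqref{e42} cancel exactly. Both hinge on the fact that $\mu_-$ and $\tilde\mu_-$ share the same leading density, which is built into the class $\SS^1$.
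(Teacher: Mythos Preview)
Your proposal is correct and follows essentially the same route as the paper's proof: both use Lemma~\ref{l5} together with the decay characterization of the spectral gap, the identity $H\mu_- - H\ti\mu_- = H\ti\mu_-\bigl(e^{H(k-\ti k)}-1\bigr)$ coming from \eqref{e42}, and a size estimate on $H\ti\mu_-(iy)$ to pass between the two decay conditions. The only cosmetic difference is that you invoke the sharper fact that $H\ti\mu_-(iy)$ is bounded above and below (true for these density-one spectral measures), whereas the paper uses only the more general observation that the Herglotz integral of a positive measure decays at most polynomially on $i\R_+$; either suffices.
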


\begin{proof}
Recall that $\mu_--\ti\mu_-$ has a spectral gap  $(-2a,2a)$ if and only if $H(\mu_--\ti\mu_-)(iy)=O(e^{-2ay})$ as $y\to\infty$.
But
$$H(\mu_--\ti\mu_-)=H\mu_-\left(1-\frac{H\ti\mu_-}{H\mu_-}\right)=H\mu_-\left(1-e^{H(\ti k-k)}\right).$$
Since $\mu_-$ is a positive measure, its Herglotz integral decays no faster than polynomially on $i\R_+$. Hence
$H(\ti k-k)$ must decay to zero faster than $e^{-2(1-\e)ay}$.
\end{proof}

\ms\no As was mentioned above, the presence of spectral gap can be equivalently reformulated in terms of decay along $i\R$:

\begin{corollary}\label{c01}Consider two operators $L$ and $\ti L$ from $\SS^2$ and let $k,\ \ti k$ be their Krein functions. Then
the potentials $q$ and $\ti q$ are equal on $(0,a\pi)$ iff
$$H(k-\ti k)(iy)=O(e^{-2ay})\textrm{ as }y\to\infty$$
iff
$$H(k-\ti k)(iy)=o(e^{-2(1-\e)ay})\textrm{ as }y\to\infty$$
for any $\e>0$.

\end{corollary}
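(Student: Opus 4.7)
The plan is to deduce Corollary \ref{c01} as a direct consequence of Lemma \ref{l6} combined with the remark following Lemma \ref{l5}. Since both of these are already established in the excerpt, the proof should amount to assembling them and verifying that the relevant measure meets the hypotheses of the spectral-gap characterization.

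First, I would invoke Lemma \ref{l6} to translate the condition on the potentials into a condition on the Krein functions: $q=\ti q$ on $(0,a\pi)$ is equivalent to saying that the real measure $\nu=(k-\ti k)\,dx$ has a spectral gap $(-2a,2a)$. Before applying the gap-decay characterization I need to check that $\nu$ actually admits a Herglotz integral, which is where a short sanity check is needed: since $k$ and $\ti k$ each take values in $\{0,\pi\}$, the density $k-\ti k$ is bounded by $\pi$, hence $\nu$ is Poisson-finite, and $H\nu$ is a well-defined holomorphic function in $\C_+$ that is real on $\R$.

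Next, I would apply the remark following Lemma \ref{l5}: a Poisson-finite real measure $\nu$ has a spectral gap $(-\alpha,\alpha)$ if and only if $H\nu(iy)=O(e^{-\alpha y})$ as $y\to\infty$, and equivalently if and only if $H\nu(iy)=o(e^{-(1-\e)\alpha y})$ for every $\e>0$. Taking $\alpha=2a$ and $\nu=(k-\ti k)\,dx$ immediately yields the two claimed equivalences. Chaining the two steps (Lemma \ref{l6} then the remark) gives precisely the statement.

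The only potential obstacle is a bookkeeping point rather than a mathematical one: one must make sure the cited remark applies to the signed, possibly non-finite but Poisson-finite, measure $(k-\ti k)\,dx$ rather than only to finite measures. This is really a consequence of how spectral gap is defined in the preceding sections (via annihilation of $PW_a\cap L^1(|\nu|)$, not via the Fourier transform of a finite measure), so no additional work should be necessary beyond noting the boundedness of $k-\ti k$. Once that is observed, the corollary follows with essentially no further computation.
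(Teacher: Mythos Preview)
Your proposal is correct and matches the paper's own approach exactly: the paper presents Corollary~\ref{c01} as an immediate consequence of Lemma~\ref{l6} together with the gap--decay equivalence stated in the remark after Lemma~\ref{l5}. Your additional observation that $k-\ti k$ is bounded (hence Poisson-finite) is a reasonable sanity check that the paper leaves implicit.
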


\begin{remark}\normalfont It follows from the proof of Lemma \ref{l5} that  $q=\ti q$ on $(0,a\pi)$ iff
$\nu_- -\ti \nu_-$  has a spectral gap  $(-2a,2a)$, where $\nu_-$ is the spectral measure
corresponding to the Neumann boundary condition at 0.  The same fact follows from Lemma \ref{l6} because $\pi - k$ and $\pi - \ti k$ are
Krein functions for $\nu_-$ and $\ti \nu_-$.

\ms\no More generally, any self-adjoint boundary condition at 0 or $\pi$ can be used in this statement with the same proof.
\end{remark}
\ms\no  Together with the above this produces the following statement, which is one of the main results of \cite{S}. It is formulated in \cite{S} in an equivalent form,
using a different $m$-function (before the square root transform).

\begin{corollary}\label{c1}
The potentials $q$ and $\ti q$ coincide on $(0,a\pi)$ iff the $m$ functions satisfy
$$m_+(iy)-\ti m_+(iy)=O(e^{-2ay})\textrm{ as }y\to\infty.$$
\end{corollary}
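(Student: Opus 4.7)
\medskip
\noindent\textbf{Proof plan.}
The plan is to chain together two ingredients that are already in place: Lemma~\ref{l5}, which converts the coincidence of the two potentials on a subinterval of $(0,\pi)$ into a spectral-gap statement for the difference of the corresponding spectral measures, and the standard equivalence (recorded in the remark following Lemma~\ref{l5} and used again in Corollary~\ref{c01}) between a real Poisson-finite measure admitting a spectral gap $(-b,b)$ and its Herglotz integral decaying like $e^{-by}$ along $i\mathbb{R}_+$. Combining these with the identity $m_+=H\mu_+$ from Section~\ref{functions} gives the corollary almost immediately.

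Concretely, I would first write
\[
m_+(iy)-\ti m_+(iy)\;=\;H(\mu_+-\ti\mu_+)(iy),
\]
noting that $\mu_+-\ti\mu_+$ is a real signed measure which, by virtue of the asymptotics \eqref{asymp1}, is Poisson-finite, so that the gap/decay criterion is applicable. Next I would apply that criterion to translate the estimate $m_+(iy)-\ti m_+(iy)=O(e^{-2ay})$ into the assertion that $\mu_+-\ti\mu_+$ has a spectral gap $(-2a,2a)$. Finally, Lemma~\ref{l5} converts this spectral-gap property back into the coincidence of $q$ and $\ti q$ on the relevant subinterval of $(0,\pi)$.

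There is really no substantive new obstacle; everything has been prepared by the preceding lemmas, which is why the statement is naturally formulated as a corollary rather than a theorem. The one point that requires a moment's care is sharpness of the constant in the exponent: to recover a spectral gap of the full width $2a$—and not merely $2a-\varepsilon$—from an $O(e^{-2ay})$ bound, one must play off each other both forms of the Herglotz-decay criterion highlighted in Corollary~\ref{c01}, namely the $O(e^{-2ay})$ form and the $o(e^{-2(1-\varepsilon)ay})$ form for every $\varepsilon>0$, and then let $\varepsilon\downarrow 0$. Once that is noted, the implication runs cleanly as an "iff" in both directions and the corollary is a bookkeeping consequence of Lemma~\ref{l5} together with the Herglotz decay criterion.
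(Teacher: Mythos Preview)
Your plan matches the paper's treatment: the paper gives no argument beyond the sentence ``Together with the above this produces the following statement,'' and the ``above'' is exactly the two ingredients you name---Lemma~\ref{l5} and the Herglotz-decay/spectral-gap equivalence recorded in the remark following it (and echoed in Corollary~\ref{c01}).

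One bookkeeping caution: as Lemma~\ref{l5} is literally stated, a spectral gap $(-2a,2a)$ for $\mu_+-\tilde\mu_+$ corresponds to $q=\tilde q$ on an interval adjacent to $\pi$, whereas it is $\mu_--\tilde\mu_-$ whose gap matches $(0,a\pi)$. Your phrase ``the relevant subinterval'' elides this mismatch. The paper's own $\pm$ labeling is not fully consistent across sections, and the Remark immediately preceding Corollary~\ref{c1} (``any self-adjoint boundary condition at $0$ or $\pi$ can be used in this statement with the same proof'') is evidently the bridge the authors have in mind. To write out a clean proof you should either invoke that remark explicitly or, more simply, run the argument with $m_-=H\mu_-$ and the first clause of Lemma~\ref{l5}, which lines up directly with the interval $(0,a\pi)$.
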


\bs\bs\subsection{The size of uncertainty}\label{2int}

Recall that for an operator $L$ we denote by $\sigma_{DD}=\{\lan\}$ and $\sigma_{ND}=\{\eta_n\}$ its spectra
after the square root transform. We will enumerate
the sequences as described in Section \ref{spectra}.

\ms\no Let $I=\{I_n\}_{n\in\Z}$ be a sequence of intervals on $\R$. Consider  the set $\SS_I$ of those operators $L\in \SS^2$ for which
$\sigma_{DD}$ and $\sigma_{ND}$ lie in the union of $I_n$. Following our discussion in the introduction,
we can ask what part of the necessary spectral information we are given by this inclusion condition?
Let us define the size of uncertainty for the sequence of intervals $I=\{I_n\}$ as the number $U(I)$
equal to the infimum of $a$ such that knowing the potential of an operator $L\in\SS_I$ on $(0,a\pi)$
one can recover $L$ uniquely. Theorem \ref{t0} below gives the following formula for the size of uncertaity:
$$U(I)=\pi \sup \left\{D_*(\Phi)\ :\ \  \sum_{n\in\Phi}\frac{\log_-|I_n|}{1+n^2}<\infty\right\}.$$

\ms\no Let us now make our statements more precise.

\begin{theorem}\label{t0}
Let $\{\e_n\}_{n\in\N}$ be positive numbers,  $a\in [0,1)$. TFAE

\ms\no 1) Any two Schr\"odinger operators $L$ and $\ti L$   satisfying
\begin{equation}|\lan-\ti\lan|<\e_{2n},\ |\eta_n-\ti\eta_n|<\e_{2n+1}, \ n\in\N,\label{e1}\end{equation}
 and $q(x)=\ti q(x)$ on $(0,d\pi), d>a$ must coincide identically, i.e.,  $q=\ti q$ a. e. on $(0,\pi)$.

 \ms\no 2) Any sequence of distinct integers $\Phi\subset \Z$ such that
 \begin{equation}\sum_{n\in\Phi\cap\N}\frac{\log_-\e_n}{1+n^2}<\infty,\label{e2}\end{equation}
satisfies $ D_*(\Phi)\leq a$.

\end{theorem}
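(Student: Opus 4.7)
Both implications go by contrapositive, bridging mixed spectral uniqueness and the Type problem via Lemma \ref{l6}: $q=\ti q$ on $(0,d\pi)$ iff the measure $\nu:=(k-\ti k)\,dx$ has spectral gap $(-2d,2d)$. Let $X:=\sigma_{DD}\cup\sigma_{DN}=\{x_m\}$ denote the combined interlacing sequence of spectra of $L$, and write $\e_m$ for the associated bound ($\e_{2n}$ if $x_m=\lan$, $\e_{2n+1}$ if $x_m=\eta_n$); we may assume $\e_m\leq 1$.

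\textit{Step 1: failure of (1) implies failure of (2).} Let $L\neq\ti L$ satisfy \eqref{e1} and $q=\ti q$ on $(0,d\pi)$ for some $d>a$. By Lemma \ref{l6} and Theorem \ref{Borg}, $\nu$ is nonzero with spectral gap $(-2d,2d)$. Since $k-\ti k\in\{0,\pm\pi\}$ and its jumps sit in $X\cup\ti X$, the support of $\nu$ consists of disjoint intervals $I_m$, one per $x_m$, with $|I_m|\leq\e_m$ and $|\nu|(I_m)\leq\pi\e_m$. Choose the weight
$$W(x):=\frac{1}{\e_m(1+x_m^2)}\text{ on }I_m,\qquad W\equiv+\infty\text{ elsewhere.}$$
Then $\int W\,d|\nu|\leq\pi\sum_m(1+x_m^2)^{-1}<\infty$, so $\nu$ is $W$-finite and $T_W\geq 2d$. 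By Theorem \ref{Bmain} there is a separated $\L_0\subset\{W<\infty\}$ with $\pi D_*(\L_0)>2d-\varepsilon$ and $\sum_{\lambda\in\L_0}\log W(\lambda)/(1+\lambda^2)<\infty$. Thinning $\L_0$ to at most one point per $I_m$ and identifying the surviving points with indices $\Sigma\subset\Z$, the asymptotics $x_m\sim\pi m/2$ give $D_*(\L_0)=(2/\pi)D_*(\Sigma)$, hence $D_*(\Sigma)>d-\varepsilon/2>a$ for $\varepsilon$ small; the summability, modulo the convergent $\sum\log(1+x_m^2)/(1+x_m^2)$, reduces to $\sum_{m\in\Sigma\cap\N}\log_-\e_m/(1+m^2)<\infty$, contradicting (2).

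\textit{Step 2: failure of (2) implies failure of (1).} Given $\Sigma$ with $D_*(\Sigma)>a$ and the summability, we construct a counterexample starting from $L=$ the free operator ($q\equiv 0$, so $A=\sin z$, $B=\cos z$, $\lan=\pi n$, $\eta_n=\pi(n-\tfrac12)$). By Theorem \ref{tHBS}, every perturbation $\ti A=\sin z+c_1 f$, $\ti B=\cos z+c_2 g$ with $f,g\in PW_1'$ satisfying $F(0)=G(0)$ and $c_1,c_2$ small gives rise to some $\ti L\in\SS^2$; the spectral constraint \eqref{e1} reduces to the pointwise bounds $|f(\pi n)|\lesssim\e_{2n}/c_1$ and $|g(\pi(n-\tfrac12))|\lesssim\e_{2n+1}/c_2$, while $q=\ti q$ on $(0,d\pi)$ translates (via Corollary \ref{c01}) into Smirnov divisibility of
$$A\ti B-\ti A B=c_2\sin z\cdot g-c_1 f\cdot\cos z$$
by $e^{i2dz}$ in $\C_+$. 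Apply Theorem \ref{Bmain} to the weight $W(x_m):=1/\e_m$ on the nominal sequence $X_0:=\pi\Z\cup\pi(\Z+\tfrac12)$: the hypotheses on $\Sigma$ (via $D_*(\{x_m:m\in\Sigma\})=(2/\pi)D_*(\Sigma)>2a/\pi$ together with the summability) yield $T_W>2a$, producing a nonzero $W$-finite real measure with spectral gap $(-2d,2d)$ for some $d>a$; Lemma \ref{MiP} makes it discrete with interlacing $\pm$-parts supported in $X_0$. A Toeplitz/Smirnov argument, paralleling our proof of Horvath's theorem in Section \ref{sHorvath}, converts this measure into the required pair $(f,g)$.

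\textit{Main obstacle.} The delicate step is Step 2: translating an abstract Type-theorem measure into a Hermite-Biehler perturbation satisfying both the pointwise smallness and the Smirnov divisibility of $A\ti B-\ti A B$. The sharpness of Theorem \ref{Bmain} (computing $T_W$ exactly via the lower density $D_*$, not just bounding it) is crucial here, ensuring that the size of uncertainty $U(I)$ matches the formula claimed in Section \ref{2int}.
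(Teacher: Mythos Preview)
Your Step 1 is essentially the paper's argument: build a weight from the $\e_m$'s, observe that $(k-\ti k)\,dx$ is $W$-finite with gap $(-2d,2d)$, and invoke the Type theorem to extract a separated sequence $\Sigma$ of interior density $>a$ with the summability \eqref{e2}. The paper inserts Lemma~\ref{MiP} to discretize first, but your direct use of Theorem~\ref{Bmain} followed by thinning is an acceptable variant.

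Step 2 is where the proposal and the paper diverge, and where your sketch has a genuine gap. The paper does \emph{not} perturb an Hermite--Biehler pair; it works entirely on the Krein-shift level. From the Type-theorem measure $\nu$ on $\pi\Z/2$ it forms $l=k_+-k_-$ (Krein shifts of $\nu_\pm$), and then \emph{defines} the four spectral sequences $\L,\ti\L,\HH,\ti\HH$ directly from the jump points of $l$, using Lemma~\ref{l1} to ensure they obey the asymptotics \eqref{e000a}--\eqref{e000}. The identity $p-\ti p=l$ between the Krein shifts of the resulting operators is then automatic, and the spectral gap of $l\,dx$ gives $q=\ti q$ on $(0,d\pi)$ via Lemma~\ref{l6}.

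Your route via Theorem~\ref{tHBS} asks for much more: you need a pair $(f,g)$ with $f$ odd, $g$ even, both in $PW_1'$ with $F(0)=G(0)$, satisfying the pointwise bounds \emph{and} such that $c_2\sin z\cdot g-c_1 f\cdot\cos z$ is divisible by $e^{i2dz}$ in $\C_+$. You assert that ``a Toeplitz/Smirnov argument, paralleling our proof of Horvath's theorem'' supplies this, but the analogy breaks down: in Section~\ref{sHorvath} only $A$ is perturbed (a \emph{single} function $F\in PW_{2-2a}^{even}$ vanishing on a prescribed set suffices, and $\ti B=B$), whereas here the divisibility condition couples $f$ and $g$ nontrivially. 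A discrete Type-theorem measure on $\pi\Z/2$ does not obviously split into values $f(\pi n)$, $g(\pi(n-\tfrac12))$ landing in $PW_1'$ with the correct parity and the $F(0)=G(0)$ constraint; $W$-finiteness alone does not force the $O(1/n)+l^2/n$ decay that membership in $PW_1'$ requires. The paper sidesteps all of this by never writing down $f,g$ at all---the Krein-shift construction produces admissible spectral sequences directly, and Lemma~\ref{l1} (not Theorem~\ref{tHBS}) is what controls the asymptotics. As written, your Step 2 is a plausible outline but not a proof.
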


\begin{proof}
By Corollary \ref{c1}, $q(x)=\ti q(x)$ on $(0,d\pi)$ for some $d>a$ iff the measure $(k(x)-\ti k(x))dx$ has a spectral gap $(-2d,2d)$.

\ms\no Suppose now that \eqref{e2} is satisfied only for $\Phi, D_*(\Phi)\leq a$, but there exist $L,\ti L$ satisfying
\eqref{e1} with $q(x)=\ti q(x)$ on $(0,d\pi), d>a$. Notice that the intervals $(\lan,\ti\lan), (\eta_n, \ti\eta_n)$ are
disjoint when $|n|$ is large; WLOG we will assume that they are disjoint for all $n$. The union of these intervals
constitutes the support of the function $l=k-\ti k$.

\ms\no Define the weight $W(x)$ as $W(x)=\e_n^{-1}/(1+x^2)$ on $(\lan,\ti\lan)\cup (\eta_n, \ti\eta_n)$ and continuously
on the rest of $\R$. Denote $A=\{l>0\}, B=\{l<0\}$. Then the set $M^W_d(A,B)$ is non-empty because it contains
the measure $l(x)dx$. By Lemma \ref{MiP}, it contains a discrete measure $\nu$ whose positive
and negative parts alternate between $A$ and $B$, i.e., each of the intervals $(\lan,\ti\lan), (\eta_n, \ti\eta_n)$ contains at most one
pointmass of $\nu$, with all positive masses contained in $A$ and all negative ones in $B$.

\ms\no Let $X=\{x_n\}$ be the support of $\nu$. Define the weight function $V(x)$ to be infinity on $\R\setminus X$
and to be equal to $W(x)$ at the points of $X$. Then $T_V\geq 2d$ because $\nu$ is a $V$-finite measure which
has a spectral gap $(-2d,2d)$. On the other hand, $T_V\leq 2a$ by \eqref{e1}, \eqref{e2} and Corollary \ref{type}.

\ms\no In the opposite direction, suppose that \eqref{e2} is satisfied  for some $\Phi, D_*(\Phi)> a$. Let us show that then there exist
$L,\ti L$ satisfying
\eqref{e1} with $q(x)=\ti q(x)$ on $(0,a\pi)$. Let $V, \ V(0)=1,$ be the even weight which is infinite outside of $\pi\Z/2$ and
equals to $\e_{2n}^{-1}/(1+x^2)$ at $\pi n$ and to $\e_{2n+1}^{-1}/(1+x^2)$ at $\pi(n+1/2)$ for $n\in\N$. By \eqref{e2} and Corollary \ref{type}, $T_V\geq 2a$. Hence, there exists
a $V$-finite measure $\nu$ supported on $\pi\Z/2$ with the spectral gap $(-2a,2a)$.
WLOG $n^2\e_n\in l^2$ and the measure $\nu$ is even. Let $k_\pm$ be the Krein spectral shifts for the positive and negative parts of $\nu$, $\nu_\pm$. Then $k_+$ is supported
on the union of intervals $(n_k/2,n_k/2+\e^+_k)$ and $k_-$ is supported
on the union of intervals $(m_k/2,m_k/2+\e^-_k)$. WLOG $\nu_-$ has a pointmass at $0$. Then $n_0=0$. \

\ms\no By Lemma \ref{l1} the pointmasses and the Krein functions of $\nu_\pm$ satisfy either \eqref{e41} or \eqref{e43} near $+\infty$. We
will assume that both satisfy \eqref{e41}: other cases can be treated similarly. In this case $n_k\e_k^\pm\in l^2$.

\ms\no Define the spectral sequences $\L=\{\lan\},\ti\L=\{\ti\lan\}, \HH=\{\eta_n\}, \ti\HH=\{\ti\eta_n\}$ of $L$ and $\ti L$ in the following way.


\ms\no Since  $\e^\pm_k$ tend to zero, for large enough $C$ the supports of $k_\pm$ become disjoint on $(C,\infty)$.
We will also assume that $C$ does not belong to either of the supports. Denote $l=k_+-k_-$.
The spectral sequences $\L,\HH,\ti \L, \ti \HH$ will be made of points where the function $l$ makes the jumps, so
that for the Krein functions $p$ and $\ti p$ of $L$ and $\ti L$ we had $l=p-\ti p$.

\ms\no To achieve the last equation, first distribute all the jumps of $l$ at integer points of $(C,\infty)$ as follows:   add  $x$ to $\L$ if $l=k_+-k_-$ has a positive jump at $x$ or add it to $\ti\L$ if
$l$ makes a negative jump at $x$. Next, if $x$ is an integer jump point of $l$ which was added to $\L$, add the next jump of $l$ to $\ti \L$ and vice versa. If $l$ does not make a jump at an integer point $x>C$, add it to both $\L$ and $\ti L$.

\ms\no As far as half-integer jumps, if $x>C, x\in \Z+1/2$, then  add  $x$ to $\HH$ if $l=k_+-k_-$ makes a negative jump at $x$ or add it to $\ti\HH$ if
$l$ makes a positive jump at $x$. If $x$ was added to $\HH$, add the next jump of $l$ to $\ti \HH$ and vice versa. If $l$ does not make a jump at a half-integer point $x>C$, add it to both $\HH$ and $\ti \HH$.

\ms\no Distribute the points similarly on $(-\infty, -C)$, so that the sequences $\L, \ti \L, \HH$ and $\ti \HH$ are even.

\ms\no On $[0,C)$ assign the points $n_k$ to $\L$, $n_k+\e^+_k$ to $\HH$, $m_k$ to $\ti\L$ and $m_k+\e^-_k$ to $\ti\HH$. On
$(-C,0)$ assign $-x$ to the same sequence as $x$.
Note that
this way we keep the sequences even and assign the same number $c_1$  of points to $\L, \ti L$ and $c_1-1$ points to $\HH,\ti \HH$. If $c_1$ is less than $c_2=\#(\Z\cap [0,C))$, which is the case if
$\nu$ skipped some of the points of $\Z/2$, we choose an interval inside $[0,C)$ where $l$ is zero and choose any $2(c_2-c_1)$ points on that
interval. After that we assign those points to $\L$ and $\HH$, so that all points are alternating between the sets. Those points put in $\L$ should also be added to $\ti\L$
and those put in $\HH$ to $\ti\HH$. Note that WLOG $[0,C)$ has a subinterval where $l$ is zero, otherwise we can just increase $C$. On $(-C,0)$ assign
the symmetric points to keep all four sequences even.

\ms\no Clearly, the sequences satisfy the asymptotics \eqref{e000a}, \eqref{e000}, and therefore define two \Sch operators $L$ and $\ti L$. We claim
that $q(x)=\ti q(x)$ on $(0,a\pi)$.

\ms\no Indeed, by our construction for the Krein spectral shifts $p$ and $\ti p$ of $L$ and $\ti L$ we have
$$H(p-\ti p)(iy)=H(k_+-k_-)(iy)=\const\frac{H\nu_+}{H\nu_-} =o(e^{-2(a-\e)y}).$$
WLOG we can assume that $H\nu(i)=0$. Otherwise, take any two zeros $a, -a \in \R$ and
consider the function $\frac{z^2+1}{z^2-a}H\nu$. It is not difficult to show that
the last function is again a Herglotz integral  $H\left(\frac{z^2+1}{z^2-a}\nu\right)$, which
decays exponentially along $i\R$, hence the measure $\frac{z^2+1}{z^2-a}\nu$ has the desired spectral gap.
Then the constants in the formula \eqref{e42} for $k_\pm$ coincide and
$\const$ in the last equation is 1. By Lemma \ref{l6}, it implies the statement.

\end{proof}

\ms\no In particular we obtain the following 'uncertainty' version of Borg's theorem

\begin{corollary} Let $\{I_n\}$ be a sequence of intervals on $\R$, $U=\cup I_n$. TFAE

\ms\no 1) The condition \begin{equation}\sigma_{DD}\cup \sigma_{ND}\subset U\label{eUP}\end{equation}
together with the values of the potential $q$ on $(0,\e)$ for any $\e>0$ determines an operator $L\in \SS^2$
uniquely

\ms\no 2) For every $\Phi\subset \N$ satisfying
$$\sum_{n\in\Phi}\frac{\log_-|I_n|}{1+n^2}<\infty,$$
there exists a long sequence of intervals $\{J_n\}$ in $\R_+$ such that
$$\frac{\#(\Phi\cap J_n)}{|J_n|}\rightarrow 0$$
as $n\to\infty$.

\end{corollary}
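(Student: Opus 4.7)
The plan is to deduce the corollary from Theorem \ref{t0} specialized to $a=0$, after translating the interval-geometric formulation (2) into the index-based BM density condition of that theorem.

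\emph{Setup of the correspondence.} By the asymptotics \eqref{e000a}--\eqref{e000}, the eigenvalues $\lan,\eta_n$ of any $L\in\SS^2$ lie within $O(1/n)$ of $\pi n$ and $\pi(n-\frac12)$ respectively, so $\sigma_{DD}\cup\sigma_{ND}$ is uniformly separated. Discarding finitely many $I_n$ (which affects neither condition), we may assume that each $I_n$ is short enough to contain at most one point of $\sigma_{DD}\cup\sigma_{ND}$ for every admissible operator. Reindex $\{I_n\}_{n\in\Z}$ so that $I_n$ contains the $n$-th point of $\sigma_{DD}\cup\sigma_{ND}$ listed in the natural interlacing order, with even indices assigned to $\sigma_{DD}$ and odd indices to $\sigma_{ND}$, and put $\e_n=|I_n|$. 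Then the hypothesis $\sigma_{DD}\cup\sigma_{ND}\subset U$ for two admissible $L,\ti L\in\SS^2$ is exactly the proximity bound \eqref{e1}.

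\emph{Invocation of Theorem \ref{t0}.} Condition (1) of the corollary says that the inclusion together with the restriction $q=\ti q$ on $(0,\e)$ for some $\e>0$ (equivalently, for every $d>0$, on $(0,d\pi)$) forces $L\equiv \ti L$. This is precisely statement (1) of Theorem \ref{t0} applied with $a=0$ for the sequence $\{\e_n\}$ just constructed. Hence (1) of the corollary is equivalent to statement (2) of Theorem \ref{t0} with $a=0$, i.e., to the assertion that every subsequence $\Sigma\subset\Z$ with
$$\sum_{n\in\Sigma\cap\N}\frac{\log_-\e_n}{1+n^2}<\infty$$
must satisfy $D_*(\Sigma)=0$.

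\emph{Translation to the geometric form.} It remains to show this purely index-theoretic statement is equivalent to the geometric condition (2). Because the $n$-th element of $\sigma_{DD}\cup\sigma_{ND}$ lies within $O(1)$ of $\pi n/2$, a separated subsequence of indices and the corresponding subsequence of interval centres share the same lower BM density up to the scale factor $2/\pi$, and the weights $(1+n^2)^{-1}$ and $(1+\dist^2(0,I_n))^{-1}$ are comparable. By the definitions in Section \ref{BM}, existence of $\Sigma$ with $D_*(\Sigma)>0$ is equivalent to existence of a long sequence $\{J_n\}$ on which $\Sigma$ has uniformly positive density, and in that case the convergence $\sum_{\Sigma}\log_-\e_n/(1+n^2)<\infty$ forces the averages $\sum_{I_k\cap J_n}\log_-|I_k|/|J_n|\to 0$ along (a subsequence of) $\{J_n\}$ — the negation of (2).

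\emph{Main obstacle.} The delicate direction is the reverse: given a long $\{J_n\}$ along which the geometric average tends to zero, one must produce a subsequence $\Sigma$ of strictly positive $D_*$ with convergent logarithmic sum. The plan is a Koosis-type selection argument — within each $J_n$ discard the few indices where $\log_-|I_k|$ is unusually large (a fraction tending to zero by a Chebyshev estimate on the vanishing average), and retain the rest. The retained subsystem still occupies a uniformly positive fraction of $J_n$, giving $D_*(\Sigma)>0$, while the trimmed logarithmic tail is Poisson-summable by the very construction that threw out the large contributions. This selection is standard in the Beurling-Malliavin and Gap Problem circle of ideas (cf.\ \cite{CBMS}) and completes the translation.
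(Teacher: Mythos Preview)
Your approach is essentially the same as the paper's: both proofs reduce to Theorem \ref{t0} at $a=0$ after identifying $\e_n=|I_n|$, and both assert that the geometric long-sequence condition (2) of the corollary is equivalent to the density condition (2) of Theorem \ref{t0}. The paper simply states this equivalence as a one-line observation, whereas you sketch the Koosis-type selection argument for the harder direction; note, however, that your phrase ``existence of $\Sigma$ with $D_*(\Sigma)>0$ is equivalent to existence of a long sequence $\{J_n\}$ on which $\Sigma$ has uniformly positive density'' is the characterization of $D^*(\Sigma)>0$, not $D_*(\Sigma)>0$, so that passage needs to be rewritten via the complement relation $D_*(\Sigma)=1-D^*(\Z\setminus\Sigma)$ from Section \ref{BM}.
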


\ms\no The first statement can be equivalently formulated as follows: if $L,\ti L\in \SS^2$ satisfy \eqref{eUP} and $q=\ti q$
on $(0,\e)$ for some $\e>0$ then $L\equiv \ti L$. The definition of long sequences of intervals (in the sense of Beurling and Malliavin)
was given in Section \ref{BM}.

\begin{proof}
Notice that if $\e_n=|I_n|$ then the condition in the second statement holds iff
\eqref{e2} in Theorem \ref{t0} is satisfied only for sequences $\Phi$ of interior density $0$.
Now the statement follows from Theorem \ref{t0} with $a=0$.
\end{proof}

%
%
%
%

\bs\bs\subsection{Three-interval statements}\label{s3int}

\ms\no Our goal in this section is to describe all possible counterexamples in the three interval problem. To formulate Theorem \ref{3int} below
we will need some preparation.

\ms\no Let $\mu$ be a positive measure on $\R$ such that $PW_a\subset L^2(\mu)$ for all $a>0$. It is well known, see for instance \cite{OS},
that any spectral measure of a \Sch operator from $\SS^2$ satisfies this condition. We define the type of $\mu$ as
$$T_\mu=\inf \{a|\ PW_a\textrm{ is dense in }L^2(\mu)\}.$$

\ms\no Let now $\L$ be a sequence satisfying the asymptotics \eqref{e000a}. Denote by $\eta$ the counting measure of $\L$.
It follows from the Type Theorem of \cite{Type, CBMS}, and in fact from much earlier results on the type problem, that $T_\eta=1$.
Moreover, as follows for instance from the results of  \cite{OS}, $L^2(\eta)=PW_1$ (as sets).

\ms\no Let now $0<c,d<1$ be real constants. In our next statement we will need  two even functions $f,g$,
$$f\in L^2(\eta)\ominus PW_{c},\ \  g\in L^2(\eta)\ominus PW_{d}$$
which satisfy certain asymptotics, see \eqref{e3}. As we will discuss after the statement, such functions form
dense subsets in the corresponding infinite-dimensional subspaces of $L^2(\eta)$. Recall that by $\Gamma(\L)$ we denote
the characteristic sequence of $\L$, or equivalently the sequence of pointmasses of the even operator, see Section \ref{even}.

\begin{theorem}\label{3int} Let $\L$ be a sequence satisfying the asymptotics \eqref{e000a}, $\Gamma(\L)=\{\gamma_n\}$. Denote by $\eta$ the counting measure of $\L$. Let
 $a,b\in [0,1/2)$ be arbitrary constants

\ms\no 1) for any two real even functions $f,g$,
$$f\in L^2(\eta)\ominus PW_{2a},\ \  g\in L^2(\eta)\ominus PW_{2b}$$
 satisfying
\begin{equation}|n|f(\lan)=a_n,\ |n|g(\lan)=a_n\left(1+\frac{b_n}{|n|+1}\right),\label{e3}\end{equation}
 for some $ a_n,b_n\in l^2$, $b_n\geq -|n|-1$,
the measures $\mu=\sum \alpha_n\delta_{\lan}$ and  $\ti\mu=\sum \ti\alpha_n\delta_{\lan}$, where
$$\alpha_n=\frac {f(\lan)}2 + \sqrt{\frac{f^2(\lan)}4 + \gamma^2_n\frac {g(\lan)}{f(\lan)}},$$
\begin{equation}\ti\alpha_n=-\frac {f(\lan)}2 + \sqrt{\frac{f^2(\lan)}4 + \gamma^2_n\frac {g(\lan)}{f(\lan)}}\label{e5}\end{equation}
when $a_n\neq 0$ and $\alpha_n=\ti\alpha_n=1+c_n/n$ for some $c_n\in l^2$ when $a_n=0$,
are spectral measures for two \Sch  operators $L,\ti L\in \SS^2$  such that
$$\sigma_{DD}=\ti\sigma_{DD}=\L,\ q=\ti q\textrm{ on }[0,a\pi]\cup [(1-b)\pi,\pi].$$

\ms\no 2) For any two spectral measures of Schr\"odinger operators $L,\ti L$ on $[0,\pi]$ such that
$$\sigma_{DD}=\ti\sigma_{DD}=\L,\ q=\ti q\textrm{ on }[0,a\pi]\cup [(1-b)\pi,\pi],$$
their pointmasses must satisfy
$$\alpha_n-\ti\alpha_n=f(\lan), \ \ti\alpha^{-1}_n-\alpha^{-1}_n=\gamma_n^{-2}g(\lan),$$
for some even functions $$f\in L^2(\eta)\ominus PW_{2a},\ g\in L^2(\eta)\ominus PW_{2b},$$
satisfying \eqref{e3} with some $ a_n,b_n\in l^2$, $b_n\geq -|n|-1$.

\end{theorem}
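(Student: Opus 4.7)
The strategy is to translate the conditions $q = \tilde q$ on each of the two intervals into spectral-gap statements via Lemma~\ref{l5}, express these gaps as orthogonality relations in $L^2(\eta) = PW_1$, and use the Krein constraint $\alpha_n\beta_n = \gamma_n^2 = \tilde\alpha_n\tilde\beta_n$ from Lemma~\ref{leven} to tie the two resulting pointmass differences together. Writing $\mu = \mu_+ = \sum \alpha_n \delta_{\lambda_n}$ and using $\beta_n = \gamma_n^2/\alpha_n$, the two relevant difference measures supported on $\Lambda$ take the form
$$\mu_+ - \tilde\mu_+ = \sum f(\lambda_n)\,\delta_{\lambda_n}, \qquad \tilde\mu_- - \mu_- = \sum g(\lambda_n)\,\delta_{\lambda_n},$$
with $f(\lambda_n) = \alpha_n - \tilde\alpha_n$ and $g(\lambda_n) = \gamma_n^2(\tilde\alpha_n^{-1} - \alpha_n^{-1}) = \tilde\beta_n - \beta_n$.

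For the necessity direction, assuming $q = \tilde q$ on $[0, a\pi] \cup [(1-b)\pi, \pi]$, Lemma~\ref{l5} endows the two difference measures with the appropriate spectral gaps. The counting measure $\eta$ of $\Lambda$ is a sampling measure for $PW_1$, so $L^2(\eta) = PW_1$ with equivalent norms; by the pointmass asymptotics (\ref{asymp1}) and (\ref{asymp2}), both $f$ and $g$ lie in $L^2(\eta)$. Cauchy-Schwarz unfolds a gap relation $\int \phi\, d(\mu_+ - \tilde\mu_+) = 0$ into $\langle f, \phi\rangle_{L^2(\eta)} = 0$ for $\phi$ in the corresponding Paley-Wiener space, and similarly for $g$, placing both functions in the stated orthogonal complements. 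Evenness reflects the symmetries $\lambda_{-n} = -\lambda_n$ and $\alpha_{-n} = \alpha_n$, while the asymptotic (\ref{e3}) follows from (\ref{asymp1}), (\ref{asymp2}), Lemma~\ref{leven}, and $\gamma_n = 1 + O(1/n)$ (which holds because the even operator itself lies in $\SS^2$); the one-sided bound $b_n \geq -|n| - 1$ is forced by positivity of $\tilde\alpha_n$.

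For the sufficiency direction, the quadratic formulas are engineered so that $\alpha_n - \tilde\alpha_n = f(\lambda_n)$ together with a product $\alpha_n\tilde\alpha_n$ whose value in terms of $\gamma_n, f, g$ is precisely the one demanded by Lemma~\ref{leven} for the companion relation $g(\lambda_n) = \tilde\beta_n - \beta_n$ to hold. Positivity of the radicand and of $\tilde\alpha_n$ comes from the sign conditions in (\ref{e3}) and $b_n \geq -|n|-1$. A Taylor expansion of the square root, combined with $\gamma_n = 1 + O(1/n)$ and (\ref{e3}), yields $\alpha_n = 1 + c_n/(n+1)$ with $c_n \in l^2$, matching (\ref{asymp1}). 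Together with the given asymptotic (\ref{e000a}) of $\Lambda$, the characterization of $\SS^2$-spectral data summarized after Theorem~\ref{tHBS} produces operators $L, \tilde L \in \SS^2$ with common $\sigma^{DD} = \Lambda$ whose first spectral measures are $\mu$ and $\tilde\mu$. The orthogonality hypotheses on $f, g$ then reverse the Cauchy-Schwarz step above, delivering the required spectral gaps to the two difference measures, and Lemma~\ref{l5} completes the argument by giving $q = \tilde q$ on $[0, a\pi] \cup [(1-b)\pi, \pi]$.

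The principal technical obstacle is the asymptotic bookkeeping in the construction direction: the quadratic formulas intertwine the $O(1/n)$ expansions of $\gamma_n, f, g$ in a delicate way, and extracting the precise form $\alpha_n = 1 + c_n/(n+1)$ with $c_n \in l^2$ requires careful Taylor expansion exploiting the interplay between evenness and (\ref{e3}). A secondary subtlety is the degenerate case $a_n = 0$, where $f(\lambda_n) = g(\lambda_n) = 0$ and the quadratic formulas collapse; the theorem handles this by allowing the ad hoc definition $\alpha_n = \tilde\alpha_n = 1 + c_n/n$ at such indices, but one must verify that the overall $l^2$-summability of $\{c_n\}$ is preserved across this dichotomy.
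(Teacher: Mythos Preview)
Your approach is essentially the same as the paper's: both directions hinge on Lemma~\ref{l5} to convert the two potential-coincidence conditions into spectral gaps for $\mu_\pm-\ti\mu_\pm$, on the identification $L^2(\eta)=PW_1$ to rewrite those gaps as orthogonality relations $f\perp PW_{2a}$, $g\perp PW_{2b}$, and on Lemma~\ref{leven} to tie $f$ and $g$ together via $\alpha_n\beta_n=\gamma_n^2$. One small labeling slip: with the paper's conventions in Lemma~\ref{l5}, it is $\mu_--\ti\mu_-$ (not $\mu_+-\ti\mu_+$) whose gap corresponds to $q=\ti q$ on $(0,a\pi)$, so your assignment of $f$ to $\mu_+-\ti\mu_+$ swaps the roles of $a$ and $b$; the paper's own proof silently relabels $\mu$ as $\mu_-$ to match, and you should do the same.
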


\begin{remark}\normalfont
 In regard to the functions $f$ and $g$ from the statement, note that since $2a, 2b <1$ and $L^2(\eta)=PW_1$, the orthogonal complements of $PW_{2a}$
 and $PW_{2b}$ in $L^2(\eta)$ are large infinite-dimensional subspaces.

\ms\no If a function $h$ belongs to $ L^2(\eta)\ominus PW_{2a}$, then $p(z)=\textrm{Re }h(z)-\textrm{Re }h(-z)$ is an odd real function from the same subspace.
The function $f(z)=p(z)/z$ will satisfy  the conditions for $f$. Moreover, the set of such functions is dense in the space of
all even  functions from $L^2(\eta)\ominus PW_{2a}$.

\ms\no To choose $g$, assume for instance that $a\geq b$. Choose two more real odd functions $p_1, p_2$ in $ L^2(\eta)\ominus PW_{2a}$ as described above. Then the function
$r(z)=p_1(z)p_2(0)-p_1(0)p_2(z)$ will have a double zero at 0. The function $ g=f+r/z^2$ will satisfy the conditions for $g$. If $a< b$ one
may proceed in the opposite direction, first choosing $g$ and then $f$.

\end{remark}

\begin{proof}[Proof of Theorem \ref{3int}] 1)
If $\alpha_n$ and $\ti\alpha_n$ satisfy \eqref{e5} then $\mu_-=\mu$ and $\ti\mu_-=\ti\mu$ satisfy
the asymptotics \eqref{asymp2}. Hence they are the spectral measures of some \Sch  operators $L$ and $\ti L$. Since $\mu_--\ti\mu_-=f\eta\perp PW_{2a}$,
$q=\ti q $ on $(0,a\pi)$ by Lemma \ref{l5}. By Lemma \ref{leven}, $\mu_+-\ti\mu_+=g\eta$ and therefore by Lemma \ref{l5}, $q=\ti q $ on $((1-b)\pi,\pi)$.

\ms\no 2) In the opposite direction, if $L$ and $\ti L$ are as in the statement, then by Lemma \ref{l5} the measures $\mu_--\ti\mu_-$ and
$\mu_+-\ti\mu_+$ have spectral gaps. Hence they annihilate the corresponding $PW$-spaces, i.e.
 $\mu_--\ti\mu_-=f\eta$ and
$\mu_+-\ti\mu_+=g\eta$ for some even functions $f\in L^2(\eta)\ominus PW_{2a},\ \  g\in L^2(\eta)\ominus PW_{2b}$. Lemma \ref{leven} and
the asymptotics \eqref{asymp1}, \eqref{asymp2} imply \eqref{e3}.
\end{proof}

\ms\no Theorem \ref{3int} describes all possible counterexamples for the three-interval problem. As we can see, this set is richer  than previously thought, with the original counterexample from \cite{SG2} discussed in the introduction corresponding to the case $f=g$, i.e., $b_n\equiv 0$ in the notations of the statement. On the other hand, all of the counterexamples must be close to the original in the sense that the difference between
$f$ and $g$ has higher order of decay in comparison with either function.

\bs\subsection{Uniqueness in the 3-interval problem near the even operator}\label{3intU}

\ms\no In this section we will look at the 3-interval problem from a slightly different point of view. We will ask if there exist
operators $L$ uniquely determined by the '3-interval' information. Our result here can be viewed as a set-up for the problem
of description of such operators discussed in the next section.

\ms\no Let $\mu=\mu_-=\sum \alpha_n\delta_{\lan}$ denote the spectral  measure of an  operator $L$ from $\SS^2$ as defined in Section \ref{functions}.
For $\L\subset \Z$ denote
$$D'_*(\L)=\max\{D_*(\Phi)|\ \Phi\subset\L, \{n,n+1\}\not\subset \Phi, \forall n\in \Z\}.$$
Once again, for a discrete sequence $\L\subset\R$ we denote by $\Gamma(\L)$ its characteristic sequence
from Section \ref{even}.

\begin{theorem}\label{tUoper} Let $ 0\leq a<1/2$ and let $\{\e_n\}_{n\in\N}$ be a sequence of positive numbers such that for
$\Phi\subset\N$ the inequality
\begin{equation}\sum_{n\in\Phi}\frac {\log_-\e_n}{1+n^2}<\infty\label{e22}\end{equation}
implies
$$D'_*(\Phi)\leq a.$$
Then
any operator $L\in \SS^2$, satisfying
\begin{equation}|\alpha_n-\gamma_n|<\e_n,\label{e21}\end{equation}
where $\{\gamma_n\}=\Gamma(\sigma_{DD})$, is uniquely determined by its spectrum $\sigma_{DD}$ and its potential
$q$ on $(0,d\pi)\cup ((1-d)\pi,\pi)$ for any $d>a$. I.e., if $L$ satisfies \eqref{e21} then any $\ti L\in \SS^2$ with $\ti\sigma_{DD}=\sigma_{DD}$ and
$\ti q=q$ on $(0,d\pi)\cup ((1-d)\pi,\pi)$ for some $d>a$ must be identical to $L$.

\end{theorem}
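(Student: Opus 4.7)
The plan is to argue by contradiction. Suppose $L,\ti L\in \SS^2$ both satisfy the hypotheses (same spectrum $\L=\sigma_{DD}$, identical potentials on $(0,d\pi)\cup ((1-d)\pi,\pi)$ for some $d>a$, and $|\alpha_n-\gamma_n|,\,|\ti\alpha_n-\gamma_n|<\e_n$), but $\ti L\neq L$. By Lemma \ref{l5} the difference $\tau:=\mu_--\ti\mu_- = \sum (\alpha_n-\ti\alpha_n)\delta_{\lan}$ has a spectral gap $(-2d,2d)$, and by Marchenko's uniqueness theorem $\ti L\neq L$ forces $\tau\not\equiv 0$. The triangle inequality yields the pointwise bound $|\alpha_n-\ti\alpha_n|\leq 2\e_n$.

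Next I will apply Corollary \ref{l2} to the signed discrete measure $\tau$ on the separated sequence $\L$: for every $d'<2d$ there exists $\Sigma'\subset\supp\tau$ with $\pi D_*(\Sigma')>d'$ and
$$\sum_{\lan\in\Sigma'}\frac{\log_{-}|\alpha_n-\ti\alpha_n|}{1+n^2}<\infty.$$
Since $|\alpha_n-\ti\alpha_n|\leq 2\e_n$ (so $\log_{-}|\alpha_n-\ti\alpha_n|\geq \log_{-}\e_n-\log 2$ for all but finitely many $n$), this will convert into $\sum_{n\in\Sigma'}\log_{-}\e_n/(1+n^2)<\infty$, where I identify $\Sigma'$ with its set of indices in $\Z$.

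Then I will choose $d'\in (2a,2d)$, which is non-empty because $d>a$. Because $\lan\sim\pi n$, the real-line density estimate $\pi D_*(\Sigma')>d'$ translates into the integer-density estimate $D_*(\Sigma_0)>d'>2a$ for the index set $\Sigma_0\subset\Z$ of $\Sigma'$. The pointmasses $\alpha_n,\ti\alpha_n,\gamma_n$ are symmetric in $n$, so $\Sigma_0$ may be taken symmetric under $n\mapsto -n$; then $\Sigma_0^+:=\Sigma_0\cap\N$ has one-sided density exceeding $2a$. A standard pigeonhole (retain alternating elements within each maximal run of consecutive integers in $\Sigma_0^+$) will produce a subset $\Sigma''\subset\Sigma_0^+$ with no two consecutive integers and $D_*(\Sigma'')>a$. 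Hence $D'_*(\Sigma_0^+)\geq D_*(\Sigma'')>a$, while the sum condition combined with the hypothesis on $\{\e_n\}$ forces $D'_*(\Sigma_0^+)\leq a$, the desired contradiction.

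The main obstacle I anticipate is the density bookkeeping: the argument must carefully convert between real-line density (where Corollary \ref{l2} is formulated, with scaling factor $\pi$ from $\lan\sim\pi n$) and the one-sided integer density appearing in the hypothesis, then extract from $\Sigma_0^+$ a no-consecutive-integers subset of sufficient density. The gap $2d-2a>0$ supplies precisely the slack needed to absorb both the factor $\pi$ (already built into Corollary \ref{l2}) and the factor-$2$ loss in the no-consecutive extraction; symmetry in $n$ handles the passage from $\Z$ to $\N$ without further loss.
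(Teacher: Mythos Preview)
Your proposal has a genuine gap: you have added a hypothesis that is not in the theorem. The statement assumes only that $L$ satisfies \eqref{e21}; the competing operator $\ti L$ is an \emph{arbitrary} element of $\SS^2$ with the same spectrum and the same potential on the two outer intervals. Nowhere is it assumed that $|\ti\alpha_n-\gamma_n|<\e_n$. Consequently the triangle-inequality bound $|\alpha_n-\ti\alpha_n|\leq 2\e_n$ is unjustified, and your conversion of the $\log_-|\alpha_n-\ti\alpha_n|$ sum into a $\log_-\e_n$ sum collapses. With that step removed, applying Corollary~\ref{l2} to $\mu_--\ti\mu_-$ alone tells you nothing that touches the hypothesis on $\{\e_n\}$.

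The paper circumvents this precisely by \emph{not} trying to bound $f(\lan)=\alpha_n-\ti\alpha_n$ directly. Instead it uses both spectral measures: setting $g(\lan)=\beta_n-\ti\beta_n$ and invoking Lemma~\ref{leven} ($\alpha_n\beta_n=\gamma_n^2=\ti\alpha_n\ti\beta_n$) yields an algebraic identity expressing $f-g$ in terms of $f$, $\alpha_n$ and $\gamma_n$ only --- no $\ti\alpha_n$ appears. One then observes that at each index either the numerator of $f-g$ is positive, or else $|f(\lan)|\lesssim|\alpha_n-\gamma_n|$; so the \emph{negative} part of the signed measure $(f-g)\eta$ is, up to a density-zero set, supported where $|\alpha_n-\gamma_n|$ is not too small. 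Since $(f-g)\eta$ has spectral gap $(-2d,2d)$, Corollary~\ref{forTeven} forces its negative support to have interior density at least $d$, which contradicts the assumed density bound coming from \eqref{e22}. This sign/support argument is the missing idea; a direct size estimate on $\alpha_n-\ti\alpha_n$ is not available.

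A secondary concern: even granting your extra hypothesis, the ``halving'' step (extracting a no-consecutive-integers subset of interior BM density $>a$ from one of density $>2a$) is not obviously valid for $D_*$, which is highly non-additive; you would need to argue this carefully rather than invoke pigeonhole.
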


\begin{remark}\normalfont The last statement displays an interesting phenomenon. As we can see from the original counterexample, if an operator is close to even (but not even)
in terms of potential (in the sense of direct problem), namely if its potential is even on $[0,d]\cup [1-d,\pi]$, then it is not uniquely defined by the corresponding
mixed spectral data. However, if an operator is close to even in terms of the spectral measure (in the sense of inverse problem) then it is uniquely defined by the mixed spectral data. This property is yet to be fully understood. One of its particular consequences is a statement typical for the
area of the Uncertainty Principle: an operator cannot be close to even (without being even) simultaneously with respect to its potential and spectrum, in the sense of the last statement.

\ms\no It is clear from the proof below that instead of $\ti\sigma_{DD}=\sigma_{DD}$ one could require that the intersection of the spectra have large enough density.
\end{remark}
\begin{proof} Let $\L=\sigma_{DD}$.
Let $\ti L$ be an operator as in part 1) of the statement, different from $L$. Then $f(\lan)=\alpha_n-\ti\alpha_n$ and $g(\lan)=\beta_n-\ti\beta_n$ are the functions
like in the statement of Theorem \ref{3int}. Notice that
$$g=\frac{\gan^2 f}{(\alpha_n +f)\alpha_n}$$
and
$$f-g=\frac{(\alpha_n^2-\gan^2) f +
\alpha_n f^2}{(\alpha_n +f)\alpha_n}.$$
Since the measure $(f-g)\eta$ annihilates $PW_{2d}$, by Corollary \ref{l2},
$$\sum_{n\in\Delta}\frac{\log_-|(\alpha_n^2-\gan^2) f -\alpha_n f^2|}{1+n^2}<\infty$$
for some $\Delta,\ \pi D_*(\Delta)\geq 2d$. Let $\Phi,\ \pi D_*(\Phi)= a $, be a
sequence of maximal density satisfying \eqref{e22}. Notice that at each point of  $\Z\setminus \Phi$,
either $(\alpha_n^2-\gan^2) f -\alpha_n f^2>0$ or $|f|\lesssim |\alpha_n-\gan|$.
Denote the  set of indices corresponding to the first case by $\Upsilon_1$ and those for the second case  by $\Upsilon_2$.
Then $D_*(\Upsilon_2\cap \Delta)=0$. Hence, up to a sequence of density 0,
$\Delta$ must be contained in $\Phi\cup\Upsilon_1$.

\ms\no But the measure $(f-g)\eta$ is positive on $\{\lan\}_{n\in\Upsilon_1}$. Hence, its negative part
must be contained in $\Phi$, up to a sequence of density 0. By Corollary \ref{forTeven}, the support of the
negative part must have density at least $d$, which contradicts the relation $d>a$.

\end{proof}

\section{Further Examples and Open Problems}

\bs\subsection{Problem 1: Description of defining sequences in the case of condition-free endpoint in the two-interval problem.}\label{P1}

In Horvath' theorem discussed in Section \ref{sHorvath} the operator is recovered from a part of its potential on $(0,a\pi)$ and
the values of the Weyl function $m_-$, obtained by fixing a boundary condition at the opposite point $\pi$. What if
we consider a similar problem with a condition-free end at $\pi$, i.e., try to recover an operator from its potential near $0$ and
the function $m_+$ obtained by fixing a boundary condition at the same endpoint $0$? In this case the statement
analogous to Horvath' theorem fails.

\begin{example}
The simplest counterexample is a modification of the 3-interval counterexample from \cite{SG2} mentioned before.
Let the operators $L$ and $\ti L$  from $\SS^2$ be such that
$q=\ti q \textrm{ on }(0,(1-\e)\pi)$ and $q(x)=\ti q((2-\e)\pi-x)$ on $((1-\e)\pi,\pi)$. Consider an auxiliary operator
$L_a$ on $(0,(2-\e)\pi)$ whose potential is defined as $q_a=q$ on $(0,\pi)$ and $q_a(x)=q((2-\e)\pi)-x)$ on $(\pi,(2-\e)\pi)$.
Let $\ti L_a$ be an operator such that $\ti q_a=\ti q$ on $(0,\pi)$ and $\ti q_a(x)=q((2-\e)\pi-x)$ on $(\pi,(2-\e)\pi)$.
Note that then the DD spectra of $L_a$ and $\ti L_a$ coinside.
Denote this sequence by $\L$. It is not difficult to see the Weyl functions $m_+$ and $\ti m_+$ of the operators $L$ and $\ti L$
will coincide on $\L$. It is left to notice that the density of $\L$ is $2-\e$, i.e., much larger than $\e$.
\end{example}

\ms\no Although, as we can see from the last example, not any sequence $\L$ of large enough density can be used in our modified two-interval problem, there are
some sequences which can  be used, as shown by the following.

\begin{proposition}
Let $g\in L^2((0,\pi-\e)),\ \e<\pi/2$ and let $\Sigma=\sigma_{DD}$ be the spectrum of  the \Sch operator from $\SS^2((0,\pi-\e))$ with $q=g$.
Then any two $L,\ti L\in \SS^2((0,\pi))$, such that $q=\ti q=g$ on $(0,\pi-\e)$ and $m_+=\ti m_+$ on
a subsequence $\L$ of $\Sigma$, $\pi D^*(\L)>2\e$, must be identical.
\end{proposition}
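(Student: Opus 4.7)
The plan is to reduce the problem to the un-mixed ($a=0$) case of Horvath's theorem (Theorem~\ref{Horvath}) applied on the short interval $(\pi-\e,\pi)$. The key observation is that for $\l\in\Sigma$, the Dirichlet-initial solution $u_\l$ of $L$ (with $u_\l(0)=0$, $u_\l'(0)=1$) vanishes at $\pi-\e$ by definition of $\Sigma$. Since $q=\ti q=g$ on $(0,\pi-\e)$, the solutions $u_\l^L$ and $u_\l^{\ti L}$ coincide on $[0,\pi-\e]$, so both carry the common initial data $(0,c_\l)$ at $\pi-\e$ with $c_\l:=u_\l'(\pi-\e)\neq 0$ (otherwise $u_\l\equiv 0$). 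Consequently, on the remaining interval each $u_\l$ equals $c_\l$ times the Dirichlet-initial solution at $\pi-\e$ of the corresponding restricted operator, and the factor $c_\l$ cancels in the Weyl ratio $u_\l'(\pi)/(\l u_\l(\pi))$. Letting $\hat m_+^L$, $\hat m_+^{\ti L}$ denote the $m_+$-functions of the restrictions of $L$ and $\ti L$ to $(\pi-\e,\pi)$ (with Dirichlet initial data at the left endpoint), the hypothesis $m_+^L=m_+^{\ti L}$ on $\L$ therefore translates to $\hat m_+^L=\hat m_+^{\ti L}$ on $\L$.

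Next I rescale $(\pi-\e,\pi)$ affinely onto $(0,\pi)$ via $y=\pi(x-(\pi-\e))/\e$. A direct check shows that the Weyl ratio $u'/zu$ is preserved under the spectral substitution $\ti z=\e z/\pi$, so the rescaled pair $L_2,\ti L_2\in\SS^2((0,\pi))$ satisfies $m_+^{L_2}=m_+^{\ti L_2}$ on $\ti\L:=(\e/\pi)\L$. The exterior Beurling-Malliavin density scales inversely to the scaling factor, giving $D^*(\ti\L)=(\pi/\e)D^*(\L)>(\pi/\e)\cdot(2\e/\pi)=2$. By Theorem~\ref{bigBM}, the radius of completeness satisfies $R(\ti\L)=2\pi D^*(\ti\L)>4\pi$, so the exponential system $\{e^{i\gamma z}:\gamma\in\ti\L\cup\{*,*\}\}$ is complete in $L^2(0,2\pi)$, which is precisely the completeness condition required by the $a=0$ case of Horvath's theorem.

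Finally, I apply the un-mixed case of Horvath's theorem to $L_2,\ti L_2$. The theorem as stated concerns $m_-$, but the $m_+$-analog follows from the reflection $x\mapsto \pi-x$, which sends an operator to the one with reflected potential and identifies $m_+^L\equiv m_-^{L^*}$ (a direct check from the definitions of $u_z$, $v_z$ in Section~\ref{functions}). The un-mixed case then gives $L_2\equiv\ti L_2$, and undoing the rescaling yields $q=\ti q$ on $(\pi-\e,\pi)$; combined with the assumption $q=\ti q=g$ on $(0,\pi-\e)$ we conclude $L\equiv\ti L$. The main technical point is the reduction in the first paragraph: for $\l\in\L\subset\Sigma$ the full-operator $m_+$-values coincide with those of the restricted operators, which bypasses the mixed conditions in Horvath's theorem and permits the appeal to its $a=0$ case. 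The remainder is bookkeeping with rescaling of densities, the Beurling-Malliavin theorem, and the left-right reflection symmetry.
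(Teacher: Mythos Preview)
Your proof is correct and shares the paper's key reduction: for $\l\in\Sigma$ the Dirichlet-initial solution of either operator vanishes at $\pi-\e$, so on $\L$ the full $m_+$-values agree with those of the restrictions to $(\pi-\e,\pi)$. From that point the two arguments diverge only in packaging. The paper finishes directly: the difference $m_+^\e-\ti m_+^\e$ is a meromorphic Herglotz function whose poles lie in the union of the two restricted $\sigma_{DD}$'s (density at most $2\e/\pi$) while its zero set contains $\L$ (density exceeding $2\e/\pi$), and a Beurling--Malliavin type argument forces it to vanish. You instead rescale $(\pi-\e,\pi)$ to $(0,\pi)$, reflect to convert $m_+$ into $m_-$, and invoke the $a=0$ case of Horvath's theorem together with Theorem~\ref{bigBM}. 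Your route is slightly longer but has the advantage of citing Horvath as a black box rather than reproving the zeros-versus-poles estimate; the paper's route is more self-contained and makes the density comparison explicit. Either way the essential content is the same Beurling--Malliavin obstruction.
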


\begin{proof} Let $u_z$ and $\ti u_z$ denote the solutions satisfying the Dirichlet boundary conditions at $0$. For $z\in\Sigma$ the restrictions
	of $u_z$ and $\ti u_z$ on $(\pi-\e,\pi)$ satisfy Dirichlet boundary conditions at $\pi-\e$. Therefore the $m$-functions
	$m^\e_+,\ \ti m^\e_+$ corresponding to the restrictions of $L$ and $\ti L$ on $(\pi-\e,\pi)$ coincide with $m_+,\ti m_+$ on $\Sigma$.
Since $m_+=\ti m_+$ on $\L$, we have that $m^\e_+=\ti m^\e_+$ on $\Lambda$. The difference
$m^\e_+-\ti m^\e_+$ is a Herglotz integral with poles on a sequence of density at most $2\e$ and zeros on a sequence of larger density.
By a version of the Beurling-Malliavin theorem it implies that the difference is identically zero
and $q\equiv \ti q$.

\end{proof}

\ms\no Let $g\in L^2(0,\pi-\e)$. Denote by $\SS^2_g$ the set of all operators from $\SS^2$ such that $q=g$ on $(0,\pi-\e)$.
The natural problem which arises from the last two propositions is to describe  the set of sequences $\L$ such that
the values of the Weyl function $m_+$ on $\L$ determine an operator uniquely within the class $\SS^2_g$. The example
 shows that the set does not contain all of the sequences of proper density and the last statement
shows that the set is non-empty.

\bs\subsection{Problem 2: Description of unique operators in the three-interval case.}\label{P2}

\ms\no Let $0\leq a<b\leq 1$ and $a+(1-b)>1/2$.
As follows from the discussion in the previous section, there exist \Sch operators  $L\in \SS^2$ on $(0,\pi)$ with the following
uniqueness property: If $\ti L\in \SS^2$ is another \Sch operator such that $\ti q=q$ on $(0,a\pi)\cup (b\pi,\pi)$
and $\pi D^*(\ti\sigma_{DD}\cap \sigma_{DD})>2(b-a)$ then $\ti L \equiv L$.

\ms\no This rises a natural question of description of all such 'unique' operators, i.e., operators
uniquely determined by the restriction of their potentials on $(0,a\pi)\cup (b\pi,\pi)$ and
a properly sized subsequence of the spectrum. The example provided by Theorem \ref{tUoper} presents an operator
which is close to even in the spectral case. The question one may start with is whether there are other examples.

\ms\no While our discussion in this paper concentrates on the $\SS^2$ case, it can be extended to regular case without much difficulty. Further cases
of this problem may concern similar examples and descriptions in non-regular cases. If $q$ is unsummable, the correct question would be to
describe $L$ such that any $\ti L$ with the above properties, and such that $q-\ti q$ is small (summable, for instance), must coincide with $L$.

\ms\no The three-interval case is only a model case in which we already see some of the difficulties that were not present in the two-interval problem.
The ultimate goal in this and similar problems would be to consider other subsets of the interval. We continue this discussion in the last subsection.

\bs\subsection{Problem 3: Uncertainty in other types of spectral data}\label{P3a}

\ms\no The version of the two-spectra  problem treated in Section \ref{2int} is not the only case when a problem
 of Uncertainty Quantification appears naturally in spectral settings.  Let us give another example of such a problem.

\ms\no Let $L\in \SS^2$ be a \Sch operator with the spectral measure $\mu_-=\sum \alpha_n\delta_{\lan}$. Let
$\EE=\{\e_n\}$ be a sequence of positive numbers. Denote by $\EE_L$ the set of operators $\ti L\in \SS^2$
such that $\ti\mu_- =\sum\ti \alpha_n\delta_{\lan}$, $|\alpha_n-\ti \alpha_n|\leq\e_n$. Similarly
to Section \ref{2int}, denote by $U(\EE_L)$ the infimum of $a$ such that the values of the potential on $(0,a\pi)$
determine a \Sch operator from $\EE_L$ uniquely.

\begin{proposition}
$$U(\EE_L)=\pi \sup \left\{ D_*(\Phi)\ :\ \sum_{n\in\Phi}\frac{\log_-\e_n}{1+n^2}<\infty\right\}.$$

\end{proposition}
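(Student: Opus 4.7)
The plan is to adapt the proof of Theorem \ref{t0}, with the pointmass difference $\mu_--\ti\mu_-$ playing the role of the Krein-shift difference $(k-\ti k)\,dx$, and with Lemma \ref{l5} in place of Lemma \ref{l6}. In both directions the argument reduces to the Type and Gap machinery (Corollary \ref{type}, Corollary \ref{l2}, and Lemma \ref{MiP}) applied to the weight
$$W(\lan)=\max\bigl(1,\e_n^{-1}(1+\lan^2)^{-1}\bigr)$$
supported on the spectral sequence $\L=\{\lan\}$ (and infinite elsewhere). Write $S$ for the supremum on the right-hand side of the claimed formula.

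For the inequality $U(\EE_L)\leq\pi S$, suppose to the contrary that some $\ti L\in\EE_L$ with $\ti L\neq L$ satisfies $q=\ti q$ on $(0,a\pi)$ for some $a$ exceeding $\pi S$. By Lemma \ref{l5}, the nonzero discrete signed measure $\tau=\mu_--\ti\mu_-=\sum c_n\delta_{\lan}$ has spectral gap $(-2a,2a)$, with $|c_n|\leq\e_n$. Corollary \ref{l2} applied to $\tau$ produces a subsequence $\Sigma\subset\L$ whose density matches the gap-radius up to $\e$ and on which $\sum_{\lan\in\Sigma}\log_-|c_n|/(1+n^2)<\infty$. Since $|c_n|\leq\e_n$ implies $\log_-|c_n|\geq\log_-\e_n$, the index set $\Phi$ corresponding to $\Sigma$ is a legitimate competitor in the supremum defining $S$, contradicting the choice of $a>\pi S$.

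For the inequality $U(\EE_L)\geq\pi S$, given $\Phi\subset\N$ with $\sum_{n\in\Phi}\log_-\e_n/(1+n^2)<\infty$ and $D_*(\Phi)$ arbitrarily close to $S$, restrict $W$ to $\L_\Phi=\{\lan:n\in\Phi\}$. The hypothesis on $\Phi$ gives $\sum_{\lan\in\L_\Phi}\log W(\lan)/(1+\lan^2)<\infty$, so Corollary \ref{type} bounds $T_W$ from below by the relevant multiple of $D_*(\L_\Phi)$. Lemma \ref{MiP} then yields a nonzero real discrete $W$-finite measure $\nu=\sum d_k\delta_{\lan_k}$ supported on $\L_\Phi$ with the required spectral gap and with $|d_k|\lesssim\e_n$ after scaling by a small constant.

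The main technical obstacle is to ensure that $\ti\mu_-:=\mu_--\nu$ is the spectral measure of some $\ti L\in\SS^2$, i.e.\ that its pointmasses $\ti\alpha_n=\alpha_n-d_n$ obey the asymptotics \eqref{asymp2}. Because $\alpha_n$ already satisfy these asymptotics, what we need is essentially $nd_n\in l^2$ along $\Phi$ together with $\ti\alpha_n>0$; as in the proof of Theorem \ref{t0}, this is handled by a further rescaling of $\nu$ by a small constant and by appealing to Lemma \ref{l1} to pin down the decay of $d_n$ along the support of $\nu$. Once $\ti\mu_-$ is a valid spectral measure, Marchenko's inverse theorem produces $\ti L\in\EE_L$ with $\ti L\neq L$, and Lemma \ref{l5} applied to $\mu_--\ti\mu_-=\nu$ gives $q=\ti q$ on $(0,a\pi)$, completing the proof.
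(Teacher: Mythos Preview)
Your overall strategy is correct and matches the paper's one-line sketch (``follows easily from Lemma \ref{l5} and Corollary \ref{l2}''), with the upper bound handled exactly as you describe. Two technical points in the lower-bound construction deserve correction.

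First, your weight $W(\lan)=\max\bigl(1,\e_n^{-1}(1+\lan^2)^{-1}\bigr)$ does not give $|d_n|\lesssim\e_n$: $W$-finiteness only yields $|d_n|=o(\e_n\lan^2)$. Drop the Poisson factor and take $W(\lan)=\max(1,\e_n^{-1})$; then $W$-finiteness gives $\sum|d_n|/\e_n<\infty$, so $|d_n|/\e_n\to 0$ and a single rescaling achieves $|d_n|\leq\e_n$ everywhere. The hypothesis of Corollary \ref{type} is unaffected because $\lan\asymp n$. Note also that Lemma \ref{MiP} is superfluous here: since $W=\infty$ off $\L_\Phi$, any $W$-finite measure is already discrete and supported on $\L_\Phi$.

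Second, your appeal to Lemma \ref{l1} is misplaced. That lemma controls Krein-shift interval lengths and is invoked in Theorem \ref{t0} because there the spectral \emph{sequences} themselves must be built. In the present problem $\L$ is fixed and only the pointmasses change, so the ``technical obstacle'' reduces to verifying $nd_n\in l^2$ so that $\ti\alpha_n=\alpha_n-d_n$ satisfies \eqref{asymp1}. This is handled by the same WLOG reduction as in the proof of Theorem \ref{t0}: replacing $\e_n$ by $\min(\e_n,n^{-2})$ only shrinks $\EE_L$ (which is the right direction for the lower bound) while leaving the sum condition, and hence $S$, unchanged; then $|d_n|\leq\e_n$ immediately gives the required asymptotics and positivity of $\ti\alpha_n$.
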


\ms\no The proof follows easily from Lemma \ref{l5} and Corollary \ref{l2}.

\ms\no The estimates of the size of uncertainty in other variations of spectral problems may require different techniques.
The definition of $U(D)$ for the spectral data $D$ which worked for us in the problems considered in this paper may need to be further developed for other kinds of data. Let us point out, for instance, that the present
definition of $U$  will not work properly if one replaces $\mu_-$ with $\mu_+$ in the definition of $\EE_L$ above. Even though
an obvious modification of $U$ by replacing  $(0,a\pi)$ with $((1-a)\pi,\pi)$ will fix this particular problem,
one would like to have a more universal definition.

\bs\subsection{Problem 4: Further connections between mixed spectral problems and completeness problems.}\label{P3}

\ms\no Horvath' Theorem \ref{Horvath} has established a connection between the Beurling-Malliavin (BM) problem on completeness of systems of exponentials
in $L^2$ on an interval and the original case of the mixed spectral problem, the two-interval case without a condition-free endpoint.
Even though cases of multiple intervals were considered in the literature (see for instance
\cite{Tru}, Chapter 4, Problem 10a) similar connections are yet to be found.
It is interesting to observe that the analog of the BM theorem with one interval replaced with any other subset of the line, including
the next simplest case of a union of two intervals, does not exist. Despite a number of deep results on completeness of exponential systems in $L^2$ over general sets, see for
instance \cite{Olevsky}, there is no formula for the radius of completeness or  a good idea on what could replace such a formula, even in the case of two intervals.

\ms\no As we saw in our discussion in Section \ref{s3int}, many of the same complications appear in mixed spectral problems when moving from the two-interval to the three-interval and multiple-interval case.
Moreover, analogies between the Weyl transform and the Fourier transform, together with the use of the latter in BM theory,
suggest that the mixed spectral problems for more general subsets of the interval must be closely related to BM problems over general sets.
Finding such connections, i.e., formulating an analog of Horvath' theorem for more general subsets seems to be an interesting and challenging problem.

\end{document}